\DeclareSymbolFont{AMSb}{U}{msb}{m}{n}
\DeclareSymbolFontAlphabet{\Bbb}{AMSb}
\newtheorem{teo}{Theorem}[section]
\newtheorem{definition}[teo]{Definition}
\newtheorem{cor}[teo]{Corollary}
\newtheorem{prop}[teo]{Proposition}
\newtheorem{rem}[teo]{Remark}
\newtheorem{ex}[teo]{Example}
\newcommand{\norm}[1]{\left\Vert\,#1\right\Vert}
\newcommand{\abs}[1]{\left\vert\,#1\right\vert}
\newcommand{\R}{\mathbb{R}}
\def\be{\begin{equation}}
\def\ee{\end{equation}}
\def\bq{\begin{eqnarray}}
\def\eq{\end{eqnarray}}
\def\beq{\begin{eqnarray}}
\def\eeq{\end{eqnarray}}
\def\ba{\begin{array}}
\def\ea{\end{array}}
\def\bi{\begin{itemize}}
\def\ei{\end{itemize}}
\newcommand{\rank}{\textrm{rank}}
\newcommand{\su}{\; + \;}
\title[Injectivity and Almost Global Stability]{Injectivity and Almost Global Stability of Hurwitz Vector Fields }
\author{\'Alvaro Casta\~neda}
\author{V\'ictor Gu\'i\~nez}
\thanks{The first author was funded by
MathAmsud STADE. The second author
was supported by DICYT Grant 041233GM}
\subjclass{37C10, 37C75, 14R15}
\keywords{Global injectivity, Jacobian conjecture, weak Markus-Yamabe
conjecture, Almost global stability}
\begin{document}

\maketitle

\begin{abstract}
We present, in dimension $n \geq 3$, a survey of examples  to: the Jacobian conjecture, the weak Markus--Yamabe conjecture. Furthermore, we show
 and construct new examples of vector fields where the origin is almost globally asymptotically stable by using the novel concept of density functions
introduced by Rantzer.

\end{abstract}

\section{Introduction}
One of the central problems on dynamical systems is to determine
conditions under which certain points or sets are attractors for
some dynamics, that is, the orbits through
points in a neighborhood of the attractor converge to them.
In the case of continuous-time, that is, flows
associated to vector fields, an analytic condition ensuring
that an equilibrium point $x^{*}$ is a local attractor is given by the negativeness of
the real part of the eigenvalues of the Jacobian matrix at $x^{*}$. Motivated by this simple observation, in \cite{MY}, L. Markus and
H. Yamabe establish their well known global stability conjecture

\medskip

\noindent {\bf{Markus--Yamabe Conjecture (MYC):}} Let $ F: \R^n
\to \R^n$ be a $C^1-$ vector field with $ F(0) = 0 $. If for any $ x
\in \R^n$ all  the eigenvalues of $JF(x)$, the Jacobian matrix of $ F $ at $ x $,
have negative real part, then the origin is a global attractor of
the system $\dot{x} = F(x)$.

\medskip

Let us recall that the vector fields satisfying the hypothesis of {\textbf{MYC}} are called Hurwitz vector fields. It is known that the MYC is true
when $ n \leq 2 $   and false when $ n \geq 3$ (see \cite{CEGMH} for a counterexample). The proofs in the planar context, both the polynomial case (G. Meisters and C. Olech in \cite{MO}) as the $ C^1-$ case (R. Fe{\ss}ler in \cite{F}, A.A. Glutsyuk in \cite{Glu} and C. Guti\'errez in \cite{Gu}) are based on a remarkable result of C. Olech \cite{O}, where the author showed that MYC (in dimension two) is equivalent to the injectivity of the map $F.$ In $\R^n $, the problem of knowing if a Hurwitz vector field is injective is known as the Weak Markus--Yamabe conjecture.

\medskip

\noindent {\bf{Weak Markus--Yamabe Conjecture (WMYC):}} If $F:
\R^n \to \R^n$ is a  $C^1-$ Hurwitz map, then $F$ is injective.

\medskip

The {\textbf{WMYC}} is true when $n \leq 2$ and, to the best of our knowledge, it has been proved in dimension $n \geq 3$ for $C^1$ Lipschitz Hurwitz maps by A. Fernandes, C. Guti\'errez and R. Rabanal in  \cite[Corollary 4]{Fernandez}.

For $ n = 2 $, the strong injectivity theorem of Guti\'errez \cite{Gu} is the following:
``A $ C^1-$map $ f : \R^2 \to \R^2 $  is injective if $ [0,\infty) \cap \textrm{Spec}(Jf(x)) = \emptyset $, for all $ x \in \R^2 $". However, this  result fails in high dimensions as shown B. Smyth and F. Xavier \cite[Theorem 4]{Smyth}:
``There exist integers $ n > 2 $ and non--injective polynomial maps $ f : \R^n  \to \R^n $ with $ [0,\infty) \cap \textrm{Spec}(Jf(x)) = \emptyset $, for all $ x \in \R^n $".

In addition, a new tool (density functions) introduced by A. Rantzer in \cite{R}, gives sufficient conditions ensuring almost global stability of an equilibrium point for a $ C^1-$vector field in $ \mathbb{R}^n $ (\emph{i.e.}, all trajectories, except for a set of initial states with zero Lebesgue measure,  converge to the equilibrium point). Recently, this tool was used by  R. Potrie and P. Monz\'on \cite{PM} to construct a vector field $ X $ in $ \R^3 $ where the origin is almost globally stable but is not a local attractor for the differential system generated for $X$. We point out that $X$ is an almost Hurwitz  field (Hurwitz vector field except in a zero Lebesgue measure set).



This article is focused on two tasks: Firstly, we will construct (a formal description will be given later) polynomial maps $ F = \lambda I + H\colon \mathbb{R}^{3}\to \mathbb{R}^{3}$ with $JH$ nilpotent, such that the{\textbf{ WMYC}} and the {\bf{Jacobian Conjecture}}  are true, giving the inverse of $ F $ explicitly.  The results obtained are strongly related with the works \cite{C} of L.A. Campbell and \cite{ChE} of M. Chamberland and A. van den Essen. Secondly, we construct two families of  three dimensional vector fields having the Rantzer's density functions stated above. The vector fields of the first family are a generalization of the
Potrie--Monz\'on's example \cite{PM} in the sense that are almost Hurwitz and the vector field restricted to the invariant plane $ z = 0 $ is a centre. Moreover, perturbing these vector fields by $ \lambda I $, we obtain a new family a Hurwitz vector fields with the origin as global attractor which are not included in the examples of \cite[Theorem 2.7]{GC} and \cite[Theorem 2.5]{CG}.
The vector fields of the second family are not almost Hurwitz and  the the existence of a density function seems to be the only way to demonstrate the almost global stability of the origin.

The paper is organized as follows. The Section $2$ is devoted to generalize the procedure of L.A. Campbell in order to obtain new examples to {\bf{WMYC}} and the {\bf{Jacobian Conjecture}} on $\R^n.$ The Section $3,$ using density functions, shows two family of vector fields such that the origin is almost globally stable. Moreover, we construct new examples to {\bf{MYC}}.


\section{Examples to Jacobian conjecture and WMYC}

In dimension three, all existing  examples and counterexamples to MYC are maps of the form $\lambda I + H$ with $\lambda < 0$ and $JH$ nilpotent (see \cite{CEGMH},\cite{CGM}, \cite{CG}). This kind of maps, in any dimension, also are important in the study of the Jacobian conjecture since H. Bass, E. Connell and D. Wright showed
in \cite{BCW} that it suffices to solve this conjecture for such maps. Indeed, the Jacobian conjecture follows from the injectivity of these maps for all dimensions.

It is worth to emphasize that the result above triggered new questions and problems as the following one. Let $ \kappa $ be a field of characteristic zero.

\medskip

\noindent {\bf{(Homogeneous) dependence problem.}} Let $H=(H_1, \ldots, H_n) \in \kappa[x_1, \ldots, x_n]^n$ (homogeneous of degree $d \geq 1$) such that $JH$ is nilpotent and $H(0) = 0.$
Does it follow that $H_1, \ldots, H_n$ are linearly dependent over $ \kappa $?

\medskip
The homogeneous problem is true in dimension three (\cite{Bondt1}) and false in dimensions bigger than five (\cite{Bondt2}).
A counterexample for the inhomogeneous problem in dimension three was given by E. Hubbers in \cite{vE}. The map
$$H =(y-x^2, z + 2x(y-x^2), -(y-x^2)^2)$$
verifies that $JH$ is nilpotent, $\rank(JH) = 2$ and $H_1, H_2, H_3$ are linearly independent over $\kappa.$
Moreover, L.A. Campbell in \cite{C} generalizes this counterexample obtaining
$$ H = (\phi(y - x^2),z + 2 x \phi(y - x^2), -(\phi(y - x^2))^2) $$
with the same properties. Here $ \phi $  is a $ C^1-$ function of a single variable. Notice that the inverse of $ F = I + H $ can be computed explicitly as follows
$$ F^{-1} = (x - \phi(y - x^2 - z),y - z - 2 x \phi(y - x^2 - z) + (\phi(y - x^2 - z))^2,z + (\phi(y - x^2 - z))^2) \, . $$
Therefore $ F = I + H $ with $ \phi \in \R[t] $ is an example in dimension three to

\medskip

\noindent{\bf{Jacobian Conjecture on $\R^n.$}}  Every polynomial map $F: \R^n \to \R^n$ such that $\det JF \equiv 1$ is a bijective map with a polynomial inverse.

\bigskip

Let us emphasize that the previous
examples have the special form
$$H=(u(x,y,z), v(x,y,z), h(u,v)), $$
which is completely studied in \cite{ChE}, which deals with polynomial maps satisfying $ H(0) = 0 $, $ h $ has no linear part and the components of $ H $ are linearly independent over $ \kappa $.
Given $A = v_x u_z - u_x v_z$ and $B = v_y u_z - u_y v_z,$ they show that if $JH$ is nilpotent
and $\deg_z u A \neq \deg_z v B$, then there exists $T \in GL_3(k)$ such that $THT^{-1}$ takes the form
\begin{equation} \label{ache}
 (g(t),v_1 z
- (b_1 + 2 v_1 \alpha x) g(t),\alpha (g(t))^2) \, ,
\end{equation}
with $ t = y + b_1 x + v_1 \alpha x^2 $, $v_1 \alpha \neq 0 $ and $ g(t) \in k[t], g(0) = 0$ and
$\deg_t g(t) \geq 1.$

The expressions $A$ and $B$  are very useful for determine if a map
$H$ is nilpotent  \cite[Proposition 3.1]{ChE}. Also they are useful for decide if the rows of $H$ are linearly independent over $ \kappa $. In fact, we obtain the following result with $ \kappa = \R.$

\begin{prop}\label{independencia}
Let  $H=(u(x,y,z),v(x,y,z),h(u(x,y,z), v(x,y,z)))$ be a polynomial map such that the components of $H$ are linearly dependent over $\R$ and $h$ has no linear part in $u$ and $v.$ Then $A = B = 0.$
\end{prop}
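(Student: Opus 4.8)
The plan is to read both the hypothesis and the conclusion in the ring $\R[x,y,z]$ and then transfer everything to the two-variable ring $\R[U,V]$. First I would record the linear dependence of the components as a nontrivial relation
\[
c_1\, u + c_2\, v + c_3\, h(u,v) \;=\; 0 \quad\text{in } \R[x,y,z],\qquad (c_1,c_2,c_3)\neq 0 .
\]
On the geometric side, note that $A$ and $B$ are, up to sign, two of the three $2\times 2$ minors of the Jacobian of the map $\Phi=(u,v)\colon \R^3\to\R^2$: one has $A=u_zv_x-u_xv_z=-\det\!\begin{pmatrix} u_x & u_z\\ v_x & v_z\end{pmatrix}$, the $(x,z)$-minor, and $B=u_zv_y-u_yv_z=-\det\!\begin{pmatrix} u_y & u_z\\ v_y & v_z\end{pmatrix}$, the $(y,z)$-minor. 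I would prove the statement for $A$, the argument for $B$ being verbatim the same with $y$ in place of $x$, and I would argue by contradiction, assuming $A\not\equiv 0$.

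If $A\not\equiv 0$, then the $(x,z)$-minor of $J\Phi$ is a nonzero polynomial, so $\rank J\Phi=2$ on a nonempty, hence dense, Zariski-open subset of $\R^3$. By the rank theorem, at such a regular point $\Phi$ is a submersion, so its image contains a nonempty Euclidean-open set $W\subset\R^2$. This transfer is the heart of the proof and the step I expect to be the main obstacle: it promotes the relation above from an identity on $\R^3$ to an identity in $\R[U,V]$. Indeed, the polynomial $c_1 U+c_2 V+c_3 h(U,V)$ vanishes on $\Phi(\R^3)\supset W$, and a real polynomial vanishing on a nonempty open set is identically zero, so
\[
c_1\, U + c_2\, V + c_3\, h(U,V) \;=\; 0 \quad\text{in } \R[U,V].
\]

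At this point I would exploit the two remaining hypotheses by comparing homogeneous parts in $\R[U,V]$. The degree-one component of the identity is $c_1U+c_2V+c_3\,(\text{linear part of }h)$; since $h$ has no linear part in $u,v$, this forces $c_1=c_2=0$. The relation collapses to $c_3\,h(U,V)=0$, and as $(c_1,c_2,c_3)\neq 0$ we must have $c_3\neq 0$, whence $h\equiv 0$, contradicting that $h$ is a genuine (nonzero, hence of order $\ge 2$) component. This contradiction gives $A\equiv 0$, and symmetrically $B\equiv 0$, since $B\not\equiv 0$ equally forces $\rank J\Phi=2$ and the same dense image. The one point demanding care is the tacit nondegeneracy $h\not\equiv 0$: were $h$ the zero polynomial the components would be trivially dependent while $A,B$ need not vanish, so it is exactly the standing setting of \cite{ChE}, where $H(0)=0$ and $h$ has no linear part, that makes the degree comparison conclusive.
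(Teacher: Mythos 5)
Your proof is correct, but it takes a genuinely different route from the paper's. The paper differentiates the dependence relation, reading it as $h(u,v)=\alpha u+\beta v$ with nonzero reals $\alpha,\beta$, to get the pointwise system $(h_u-\alpha)\nabla u+(h_v-\beta)\nabla v\equiv 0$, and then eliminates by hand: dividing by $(h_u-\alpha)^2$ it derives $u_zA-v_zB=0$ and $u_zA+v_zB=0$, with a separate discussion of the locus $h_u=\alpha$. You instead run a rank dichotomy on $\Phi=(u,v)$: either all $2\times 2$ minors of $J\Phi$ vanish identically (so $A=B=0$), or $\Phi$ is a submersion somewhere, its image contains a nonempty open set, and the dependence relation is promoted to the identity $c_1U+c_2V+c_3h(U,V)=0$ in $\R[U,V]$, where comparison of homogeneous components concludes. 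Your route is more conceptual, avoids the divisions and case analysis, and isolates the genuine hidden hypothesis $h\not\equiv 0$: as you note, the statement is literally false without it (e.g.\ $u=x$, $v=z$, $h=0$ gives dependent components with $A=-1$), a degenerate case the paper's proof silently excludes by requiring $\alpha,\beta$ nonzero; the paper's route, in exchange, is elementary, using only polynomial identities among partial derivatives. One caveat: your closing appeal to the ``standing setting'' of \cite{ChE} does not quite work, since $H(0)=0$ and the absence of a linear part do not force $h\not\equiv 0$; the honest fix is to add $h\not\equiv 0$ as a hypothesis, which does hold in the paper's application, where $h(u,v)$ is a nonzero constant multiple of $u^2$.
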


\begin{proof}
Let $\alpha, \beta \in \mathbb{R}^*$ such that

\begin{displaymath}
h_u \cdot (u_x, u_y, u_z) + h_v \cdot (v_x, v_y, v_z) \equiv \alpha \cdot (u_x, u_y, u_z) + \beta \cdot (v_x, v_y, v_z)
\end{displaymath}

which is equivalent to

\begin{equation}
\label{10}
(h_u - \alpha) \cdot (u_x,u_y,u_z) + (h_v - \beta) \cdot (v_x, v_y, v_z) \equiv 0.
\end{equation}

Thus, we have the following systems of equations

\begin{equation}\label{sistemas}
\left \{
\begin{array}{rcl}
(h_u - \alpha) u_x + (h_v - \beta) v_x &= &0\\
(h_u - \alpha) u_y + (h_v - \beta) v_y &= &0\\
(h_u - \alpha) u_z + (h_v - \beta) v_z &= &0.
\end{array}
\right.
\end{equation}

By using (\ref{sistemas}), we see that

$$
\begin{array}{rcl}
u_z A - v_z B & = &u_z^2 v_x - u_z v_z (u_x + v_y) + v_z^2 u_y\\\\
& = & \frac{v_z^2}{(h_u - \alpha)^2}  \{ (h_v - \beta)^2 v_x + (h_v - \beta)(h_u - \alpha)(u_x + v_y) +(h_u - \alpha)^2 u_y\}\\\\
& = & \frac{v_z^2}{(h_u - \alpha)^2} \{(h_v - \beta)^2 v_x\\\\ & & \hspace{1.5 cm} -(h_v - \beta)(h_u - \alpha)(\frac{(h_v - \beta)}{(h_u- \alpha)} v_x - v_y) - \frac{(h_u - \alpha)^2(h_v - \beta)}{(h_u -\alpha)} v_y\}\\\\
& = & 0
\end{array}
$$
over the set of points where $ h_u \neq \alpha $. Otherwise, if $h_u = \alpha$ then in (\ref{10}) we have that $(v_x, v_y, v_z) =0$  or $h_v = \beta.$ The first case immediately implies that $A = B = 0,$ the second case is not
possible due to is a contradiction with no linearity of $h$ with respect to $u$ and $v.$

In similar way is proved that $u_z A + v_z B =0$ and the result follows.

\end{proof}

\begin{teo}
\label{11}
  Consider a polynomial map of the form $ H=(u,v,h(u, v))$ such that $ H(0) = 0 $, $ h $ has no linear part and the components of $ H $ are linearly independent over $ k $. Then if $JH$ is nilpotent
and $\deg_z u A \neq \deg_z v B$, for all $ \lambda \neq 0 $ the
polynomial map $ \lambda I + H$   is injective and has inverse polynomial.
\end{teo}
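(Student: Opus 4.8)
The plan is to reduce to the Chamberland--van den Essen normal form \eqref{ache} and then write down an explicit polynomial inverse, generalizing the computation carried out above for Campbell's example. Since the hypotheses ($H(0)=0$, $h$ without linear part, components linearly independent over $k$, $JH$ nilpotent and $\deg_z uA\neq\deg_z vB$) are exactly those of the structure theorem recalled before \eqref{ache}, there is $T\in GL_3(k)$ with $\tilde H:=T\circ H\circ T^{-1}$ equal to the normal form $\bigl(g(t),\,v_1 z-(b_1+2v_1\alpha x)g(t),\,\alpha g(t)^2\bigr)$, where $t=y+b_1 x+v_1\alpha x^2$. Conjugation by a linear isomorphism preserves both injectivity and the existence of a polynomial inverse, and $T\circ(\lambda I+H)\circ T^{-1}=\lambda I+\tilde H$; hence it suffices to invert $\lambda I+\tilde H$.

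Writing $(X,Y,Z)$ for the image of $(x,y,z)$ under $\lambda I+\tilde H$, i.e.
\begin{equation*}
X=\lambda x+g(t),\qquad Y=\lambda y+v_1 z-(b_1+2v_1\alpha x)g(t),\qquad Z=\lambda z+\alpha g(t)^2,
\end{equation*}
the crucial step is to produce a polynomial combination of $X,Y,Z$ that returns the value of $t$, thereby decoupling the system. Substituting the relations $\lambda x=X-g(t)$, $\lambda z=Z-\alpha g(t)^2$ and $\lambda y=Y-v_1 z+(b_1+2v_1\alpha x)g(t)$ into $\lambda^2 t=\lambda(\lambda y)+b_1\lambda(\lambda x)+v_1\alpha(\lambda x)^2$, one finds that every term carrying a factor $g(t)$ cancels, leaving
\begin{equation*}
\lambda^2 t=\lambda Y-v_1 Z+b_1\lambda X+v_1\alpha X^2 .
\end{equation*}
(Setting $\lambda=1$, $b_1=0$, $v_1=1$, $\alpha=-1$ recovers $t=Y-X^2-Z$, the argument appearing in Campbell's inverse.) This identity is the heart of the argument and the only place where the special shape of the normal form is used.

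Once $t$ is recovered as a polynomial in $X,Y,Z$, so is $g(t)$ (as $g\in k[t]$), and one reads off
\begin{equation*}
x=\tfrac1\lambda\bigl(X-g(t)\bigr),\qquad z=\tfrac1\lambda\bigl(Z-\alpha g(t)^2\bigr),\qquad y=\tfrac1\lambda\bigl(Y-v_1 z+(b_1+2v_1\alpha x)g(t)\bigr).
\end{equation*}
Since $\lambda\neq 0$, these are genuine polynomials in $X,Y,Z$, so they define a polynomial map $G$; a direct substitution (using that the computed $t$ equals the recovered value $\tfrac1{\lambda^2}(\lambda Y-v_1 Z+b_1\lambda X+v_1\alpha X^2)$) verifies that $G$ is a two-sided inverse of $\lambda I+\tilde H$, which is therefore bijective with polynomial inverse. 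Transporting back by $T$ yields a polynomial inverse $T^{-1}\circ G\circ T$ of $\lambda I+H$ and, in particular, its injectivity. The only real obstacle is computational, namely verifying the cancellation of the $g(t)$-terms that produces the clean formula for $\lambda^2 t$; the reduction to normal form and the back-substitution are routine once that identity is in hand.
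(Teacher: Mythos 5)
Your proposal is correct and follows essentially the same route as the paper: reduction to the Chamberland--van den Essen normal form \eqref{ache} via \cite[Corollary 4.1]{ChE}, followed by the key cancellation identity $\lambda^2 t = \lambda Y - v_1 Z + b_1\lambda X + v_1\alpha X^2$, which is exactly the paper's equation \eqref{igualdad para inversa} multiplied by $\lambda^2$, and then back-substitution to get the explicit polynomial inverse. Your treatment is in fact slightly more careful than the paper's in spelling out why conjugation by $T$ preserves injectivity and the existence of a polynomial inverse.
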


\begin{proof}

According to \cite[Corollary 4.1]{ChE}, we can suppose that the components $ H_1, H_2, H_3 $ of $H$ are
as in (\ref{ache}). Namely,

\begin{eqnarray}\label{ind}
H_1(x, y, z) & = &  g( y + b_1 x + v_1 \alpha x^2 ) \, , \\
H_2(x, y, z) & = &   v_1z - (b_1 + 2 v_1 \alpha x)g( y + b_1 x + v_1 \alpha x^2 )  \,
, \nonumber \\
H_3(x, y, z) & = & \, \alpha (g( y + b_1 x + v_1 \alpha x^2 ))^2 . \nonumber
\end{eqnarray}

If

$$
\begin{array}{rcl}
u_1 & = & \lambda x + g( y + b_1 x + v_1 \alpha x^2 )\\
u_2 & = & \lambda y +  v_1z - (b_1 + 2 v_1 \alpha x)g( y + b_1 x + v_1 \alpha x^2 )\\
u_3 & = & \lambda z +  \alpha (g( y + b_1 x + v_1 \alpha x^2 ))^2,
\end{array}
$$
it is easy to obtain

\begin{equation}\label{igualdad para inversa}
\displaystyle   \gamma u_2 +\gamma b_1 u_1+ \gamma^2 v_1 \alpha u_1^2 - \gamma^2 v_1 u_3 =  y + b_1 x + v_1 \alpha x^2,
\end{equation}

\noindent where $\gamma = {1 \over \lambda}.$ Now, put $\Phi = g( y + b_1 x + v_1 \alpha x^2) = g( \gamma u_2 +\gamma b_1 u_1+ \gamma^2 v_1 \alpha u_1^2 - \gamma^2 v_1 u_3)$ and observe that
$\lambda I + H = (\lambda x + \Phi, \lambda y + v_1 z - (b_1+2 v_1 \alpha x)\Phi, \lambda z +\alpha  \Phi^2).$ By using (\ref{igualdad para inversa}), we obtain the inverse of this map which is
$$(\gamma I + P) = (\gamma x + P_1, \gamma y + P_2, \gamma z + P_3)$$
 where

\begin{equation}\label{inversa1}
\begin{array}{rcl}
P_1(x,y,z) & = &  - \gamma g(\gamma y +\gamma b_1 x+ \gamma^2 v_1 \alpha x^2 - \gamma^2 v_1 z)\\
\\
P_2(x,y,z) & = &  - \gamma^2 v_1 z + \gamma(b_1 + 2 \gamma v_1 \alpha x )g(\gamma y +\gamma b_1 x+ \gamma^2 v_1 \alpha x^2 - \gamma^2 v_1 z)\\
\\
&  &- \gamma^2 v_1 \alpha (g(\gamma y +\gamma b_1 x+ \gamma^2 v_1 \alpha x^2 - \gamma^2 v_1 z))^2\\
\\
P_3(x,y,z) & = &   -\alpha \gamma (g(\gamma y +\gamma b_1 x+ \gamma^2 v_1 \alpha x^2 - \gamma^2 v_1 z))^2.
\end{array}
\end{equation}

\end{proof}

\begin{rem}  For $ \lambda < 0 $ (resp. $ \lambda = 1$), the polynomial map $F = \lambda I + H $ of the above Theorem is an example of the WMYC (resp. the  Jacobian conjecture) on $\R^3 $.
\end{rem}

\begin{rem}
More examples for the both conjectures in dimension $n \geq 4$ can be constructed consider the following maps of \cite[Proposition 7.1.9]{vE}:

\begin{eqnarray*}
H_1(x_1,\dots,x_n) & = &  g(x_2 - a(x_1)) \, , \\
H_i(x_1,\dots,x_n) & = &  x_{i+1}  + \frac{(-1)^i}{(i-1)!} \,
a^{(i-1)}(x_1) \,
g(x_2 - a(x_1))^{i-1} \, , \; \textrm{if} \; 2 \leq i \leq n-1 \, ,\\
H_{n}(x_1,\dots,x_n) & = &  \frac{(-1)^{n}}{(n-1)!} \, a^{(n-1)}(x_1)
\,
g(x_2- a(x_1))^{n-1}
\end{eqnarray*}
where $a(x_1) \in \R[x_1]$ with $\deg a = n-1$ and  $ g(t) \in k[t], g(0) = 0$ and
$\deg_t g(t) \geq 1.$ Following the lines of the proof of the Theorem (\ref{11}), we must consider for each fixed $n \geq 4, u_i = \lambda x_i + H_i, \, i = 1, \ldots, n$ and
$$\Phi = g(x_2 - a(x_1)) = g\Big (\frac{1}{\lambda} \, u_2 -a(\frac{1}{\lambda} u) - \frac{1}{\lambda^2} \, u_3 + (-1)^n \sum_{j=4}^{n} \frac{1}{\lambda^{j-1}} \, u_j \Big )$$
for obtain the inverse (polynomial) of $\lambda I + H.$ In fact, the inverse is $\gamma I  + P$ where

\begin{eqnarray*}
P_1(x_1, \ldots, x_n) & = &  - \gamma \Phi(x_1, \ldots, x_n)\\
\\
P_{i}(x_1, \ldots, x_n) & = &  -\gamma(x_{i+1}-  \frac{(-1)^i}{(i-1)!}  a^{(i-1)}(\gamma (x_1- \Phi(x_1, \ldots, x_n))) (\Phi(x_1, \ldots, x_n))^{i-1})  ,\\\\
P_n(x_1, \ldots, x_n) & = &   - \gamma \frac{(-1)^{n}}{(n-1)!} \, a^{(n-1)}(\gamma (x_1- \Phi(x_1, \ldots, x_n))(\Phi(x_1, \ldots, x_n))^{n-1}.
\end{eqnarray*}
with $ 2 \leq i \leq n-1.$ Therefore, for $ \lambda < 0 $ (resp. $ \lambda = 1$), the polynomial map $F = \lambda I + H $  is an example of the WMYC (resp. the  Jacobian conjecture) on $\R^n $ with $n \geq 4.$

\end{rem}

\begin{ex}
We consider $n=4,$ the inverse of the map $\lambda I + (H_1,H_2,H_3, H_4)$ is $\gamma I + (P_1,P_2, P_3, P_4)$ where

\begin{eqnarray*}
P_1(x_1, x_2, x_3, x_4) & = &  -\gamma \Phi ,\\\\
P_2(x_1, x_2, x_3, x_4) & = &  -\gamma^2 x_3 + \gamma^3 x_4 - \gamma (a_1 + \gamma 2 a_2 x_1 + \gamma^2 3 a_3 x_1^2) \Phi + \gamma^2(a_2 + 3 a_3 x_1) \Phi^2   ,\\\\
P_3(x_1, x_2, x_3, x_4) & = & -\gamma^2 x_4 + \gamma (a_2 + \gamma 3 a_3 x_1) \Phi^2 - \gamma 2 a_3 \Phi^3,\\\\
P_4(x_1, x_2, x_3, x_4) & = &  - \gamma a_3 \Phi^3,
\end{eqnarray*}
where $\Phi = \Phi(x_1, x_2, x_3, x_4) = (\gamma x_2 - \gamma x_1 - \gamma^2 x_1^2 - \gamma^3 x_1^3 - \gamma^2 x_3 + \gamma^3 x_4).$

\end{ex}

\begin{rem}
A example to WMYC and Jacobian Conjecture, in dimension 4, which does not belong to the above family of maps is $$F(x,y,z,w) = (\lambda x + y, \lambda y+ x^2 - w, \lambda z + y^2, \lambda w + 2y - z) .$$ Is easy to see
that $JH$ is nilpotent and the rows of $JH$ are linearly independent over $\R$ and has inverse
$$F^{-1}(x,y,z,w) = (\gamma (x - \phi), \phi, \gamma(z-\phi^2), \gamma(w- 2 \gamma (x-\phi)\phi+ \gamma (z-\phi^2)) ),$$
where $\phi = \gamma y - \gamma^3 x^2 + \gamma^3 z + \gamma^2 w$ and $\gamma = 1/\lambda.$ Therefore, for $ \lambda < 0 $ (resp. $ \lambda = 1$), the polynomial map $F$ is an example of the WMYC (resp. the  Jacobian conjecture) on $\R^4 .$
\end{rem}

\begin{rem} In \cite[Corollary 4.2]{ChE} it is stated that a complete study of maps $H = (u,v, h(u,v))$ with $JH$ is nilpotent will be achieved when considering the case $deg_z (u A) = deg_z (vB).$ In this context, the following result gives a partial progress
to this study since we introduce a large family of maps satisfying $deg_z (u A) = deg_z (vB),$ which through a linear of change of coordinates have the form described in (\ref{ache}).
\end{rem}


\begin{prop}
The map $ \gamma I + P $ with $ P = (P_1,P_2,P_3) $  as in (\ref{inversa1})  has the following properties:

\begin{enumerate}

\item[\textbf{(P0)}] $P(x,y,z)$ has the form $(u(x,y,z), v(x,y,z), h(u(x,y,z), v(x,y,z))).$

\item[\textbf{(P1)}] The Jacobian matrix $JP$ is nilpotent and their rows  are linearly independent over $\R.$

\item[\textbf{(P2)}] $deg_z (u A) = deg_z (vB)$.

\item[\textbf{(P3)}] Under the linear change of coordinates $(\widetilde{u},\widetilde{v},\widetilde{w}) = (x, y- \gamma v_1 z, z)$ the map $ \gamma I + P $
 is transformed into $ \gamma I + \widehat{P}$ where $\widehat{P}$ has the form $(u,v,h(u,v))$ and $deg_z (u A) \neq deg_z (vB).$

\end{enumerate}

\end{prop}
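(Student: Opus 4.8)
The plan is to read everything off the explicit formulas (\ref{inversa1}). Throughout write $s=\gamma y+\gamma b_1 x+\gamma^2 v_1\alpha x^2-\gamma^2 v_1 z$ for the common argument of $g$, so that $\nabla s=(s_x,\gamma,-\gamma^2 v_1)$ with $s_x=\gamma b_1+2\gamma^2 v_1\alpha x$; set $\mu=-\gamma^2 v_1\neq0$ and $d=\deg g\geq1$, and take $u=P_1$, $v=P_2$ as dictated by (P0). Property (P0) itself is immediate: since $P_1=-\gamma g(s)$ and $P_3=-\alpha\gamma g(s)^2$ one has $P_3=-\tfrac{\alpha}{\gamma}P_1^{2}$, so $P=(u,v,h(u,v))$ with $h(u,v)=-\tfrac{\alpha}{\gamma}u^{2}$, a function of $u$ alone with no linear part. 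For the independence half of (P1) I would use $P_1(0)=P_2(0)=P_3(0)=0$, so a constant relation $c_1\nabla P_1+c_2\nabla P_2+c_3\nabla P_3\equiv0$ is equivalent to $c_1P_1+c_2P_2+c_3P_3\equiv0$; reading this in the affine coordinates $(x,z,s)$ (legitimate because $s$ is affine in $y$ with nonzero coefficient $\gamma$), the only term carrying $z$ by itself is $-c_2\gamma^2 v_1 z$, forcing $c_2=0$, after which the distinct degrees of $g(s)$ and $g(s)^2$ force $c_1=c_3=0$.

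For the nilpotency half of (P1), the conceptual reason is that $\gamma I+P=(\lambda I+H)^{-1}$ by Theorem~\ref{11}, so pointwise $\gamma I+JP=(\lambda I+JH)^{-1}=\gamma I-\gamma^2 N+\gamma^3 N^{2}$ with $N=JH$ satisfying $N^{3}=0$; hence $JP=-\gamma^2 N+\gamma^3 N^{2}$ and a one-line computation gives $(JP)^{3}=0$. Alternatively, and self-containedly, nilpotency falls out of the closed forms for $A,B$ below: the third row of $JP$ is $-\tfrac{2\alpha}{\gamma}u$ times the first, so $\det JP=0$, while $\tr JP$ and the sum of the principal $2\times2$ minors both vanish by direct substitution (using $s_x=\gamma(b_1+2\gamma v_1\alpha x)=C$ and the forms of $A,B$).

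The crux is (P2), and the main obstacle is a genuine cancellation: a naive count gives $\deg_z(v_x u_z)=\deg_z(u_x v_z)=3d-2$ in $A$, and likewise in $B$, so one must show these leading parts cancel and extract the true degrees. The clean route exploits $\nabla u=-\gamma g'(s)\,\nabla s$: writing $v=\mu z+C\,g(s)+\mu\alpha\,g(s)^2$ with $C=\gamma(b_1+2\gamma v_1\alpha x)$, one gets $A=-\gamma g'(s)\bigl(\mu v_x-s_x v_z\bigr)$, and the degree-$(2d-1)$ parts of $v_x$ and $v_z$ telescope, leaving
$$A=\gamma\mu\,g'(s)\bigl(s_x+2\mu\alpha\,g(s)\bigr),\qquad B=\gamma^{2}\mu\,g'(s).$$
Hence $\deg_z A=2d-1$ and $\deg_z B=d-1$; combined with $\deg_z u=d$ and $\deg_z v=2d$ this yields $\deg_z(uA)=3d-1=\deg_z(vB)$, which is (P2).

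Finally (P3). The inverse change is $T^{-1}(\widetilde u,\widetilde v,\widetilde w)=(\widetilde u,\widetilde v+\gamma v_1\widetilde w,\widetilde w)$, and the third coordinate is unchanged, $\widetilde w=z$. I would compute $\widehat P=T\circ P\circ T^{-1}$ and record two cancellations: the substitution kills the $\widetilde w$-part of $s$, so $s$ becomes $\widehat s=\gamma\widetilde v+\gamma b_1\widetilde u+\gamma^2 v_1\alpha\widetilde u^{2}$, free of $\widetilde w$, and in $\widehat P_2=P_2-\gamma v_1 P_3$ the two $g(\widehat s)^2$-terms cancel. The outcome has exactly the shape (\ref{ache}) (again $\widehat P_3=-\tfrac{\alpha}{\gamma}\widehat P_1^{2}$). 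Since $\widehat s$ is free of $\widetilde w$, the new first component has vanishing $\widetilde w$-derivative, so the new $A,B$ are $\widetilde w$-free while the new $v$ has $\widetilde w$-degree $1$; therefore $\deg_z(uA)=0\neq1=\deg_z(vB)$ in the new coordinates, the asserted non-degeneracy.
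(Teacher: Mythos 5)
Your proposal is correct, and its computational core coincides with the paper's: your closed forms $A=\gamma\mu\,g'(s)\bigl(s_x+2\mu\alpha\,g(s)\bigr)$ and $B=\gamma^{2}\mu\,g'(s)$ are exactly the paper's $A=-\gamma^4 v_1 g'(t)\,(\omega(x)-2\gamma v_1\alpha\,g(t))$ and $B=-\gamma^4 v_1 g'(t)$ after substituting $\mu=-\gamma^2 v_1$ and $s_x=\gamma\,\omega(x)$, your degree count $3d-1=3d-1$ for (P2) is the paper's $3k-1=3k-1$, and your (P3) computation of $\widehat P$ (including the cancellation of the two $g(\widehat s)^2$ terms in $\widehat P_2$) reproduces the paper's, except that your $\widehat P_1=-\gamma g(\widehat s)$ is the correct transform where the paper's displayed $\widehat{P_1}=g(\cdots)$ drops the harmless factor $-\gamma$. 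The genuine difference is in (P1), where your treatment is more self-contained than the paper's. For nilpotency the paper only asserts $(JP)^3=0$ ``by using an algebraic manipulator'', whereas you derive it from the chain rule: $\gamma I+JP=(\lambda I+JH)^{-1}=\gamma I-\gamma^2N+\gamma^3N^2$ with $N^3=0$, hence $JP=-\gamma^2N(I-\gamma N)$ is nilpotent; this is rigorous, computation-free, and exposes the general principle that the inverse of $\lambda I+H$ with $JH$ nilpotent again has nilpotent nonlinear part. For linear independence the paper invokes its Proposition \ref{independencia} in contrapositive form (since $B\neq 0$, the components cannot be dependent), which lets the computed $A,B$ do double duty for (P1) and (P2); you instead integrate the putative relation (legitimate since $P(0)=0$) and read off coefficients in the coordinates $(x,z,s)$, an elementary argument that does not rely on that proposition. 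Finally, in (P3) you fill in a step the paper leaves implicit: it concludes $\deg_z(uA)\neq\deg_z(vB)$ merely by noting that $\widehat P$ has the form (\ref{ache}), while you verify the inequality directly from $u_{\widetilde w}=0$ and $v_{\widetilde w}$ constant, giving $0\neq 1$. Both routes are valid; yours trades reliance on the paper's earlier machinery (and on computer algebra) for short structural arguments.
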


\begin{proof}
\textbf{(P0)} We have $ P = (u,v,h(u,v)) $ with $ h(u,v) = -\frac{\alpha}{\gamma} \, u^2 $.

\medskip

\textbf{(P1)} By using an algebraic manipulator we  see that $(JP)^3 =0 $. We have $ P(0) = 0 $ and $ P = (P_1,P_2,h(P_1,P_2)) $ with $ h(P_1,P_2) = -\frac{\alpha}{\gamma} \, P_1^2 $. Moreover,
if we consider $\omega(x) = b1 + 2 \gamma v_1 \alpha x,$  we have

$$
\begin{array}{rcl}
B & = &  \gamma^5 v_1 \omega(x) (g'(t))^2 - 2 \gamma^6  v_1^2 \alpha g(t) (g'(t))^2- \gamma^4 v_1 g'(t)+ \\\\
&&- \gamma^5 v_1 \omega(x) (g'(t))^2 + 2 \gamma^6  v_1^2 \alpha g(t) (g'(t))^2\\\\
&= & -\gamma^4 v_1 g'(t) \neq 0 \, ,
\end{array}
$$

and

$$A = -\gamma^4 v_1 g'(t) (\omega (x) - 2 \gamma v_1 \alpha g(t)) \neq 0,$$
and the result follows from Proposition \ref{independencia}.

\medskip

\noindent\textbf{(P2)} We have

$$u(x,y,z) \cdot A = -2 \gamma^6  v_1^2 \alpha (g(t))^2  g'(t)  + \gamma^5 v_1 \omega(x) g(t) g'(t)$$
and
$$v(x,y,z) \cdot B = \gamma^6 v_1^2 z g'(t) -\gamma^5 v_1 \omega(x) g(t) g'(t) + \gamma^6  v_1^2 \alpha (g(t))^2 g'(t).$$

Finally,  $\deg_z (u A) = \max \{3k-1, 2k-1\} = 3k-1$ and $\deg_z (v B) = \max \{k,2k-1,3k-1\} = 3k-1.$

\medskip
\noindent\textbf{(P3)} By considering the linear change of coordinate proposed, it is easy to see that  the coordinates $\widehat{P_1}, \widehat{P_2}, \widehat{P_3}$ of $\widehat{P}$ are

$$
\begin{array}{rcl}
\widehat{P_1}(x,y,z) & = &  g(\gamma y + \gamma b_1 x + \gamma^2 v_1 \alpha x^2)\\\\
\widehat{P_2}(x,y,z) & = &  - \gamma^2 v_1 z + \gamma (b_1 + 2 \gamma v_1 \alpha x) g(\gamma y + \gamma b_1 x + \gamma^2 v_1 \alpha x^2)\\\\
\widehat{P_3}(x,y,z) & = &  - \alpha \gamma (g(\gamma y + \gamma b_1 x + \gamma^2 v_1 \alpha x^2))^2.
\end{array}
$$

This coordinates $\widehat{P_1}, \widehat{P_2}, \widehat{P_3}$ are the same that (\ref{ache}) with $\gamma = 1,$ thus  $deg_z (u A) \neq deg_z (vB).$

\end{proof}

Now we consider, in dimension three,  polynomial maps of the form $ F = \lambda I  + H $ with $ H(0) = 0 $ and $ JH $ nilpotent such that the components of $ H $ are linearly dependent over $ \R $.
As it was shown in \cite[Proposition 2.1]{CG}, for such  vector field $ F = \lambda I  + H ,$ there exists $ T \in Gl_3(\R) $ such that $ T^{-1} F T = \lambda I + (P,Q,0)$, where
\begin{eqnarray} \label{normal}
P(x,y,z) & = &  -b(z) \, f(a(z) \, x + b(z) \, y) \su c(z) \quad \textrm{and} \nonumber \\
Q(x,y,z) & = &   a(z) \, f(a(z) \, x + b(z) \, y) \su d(z)
\,
\end{eqnarray}
with $ a, b, c, d \in \R[z] $ and $ f \in \R[z][t] $. Also it was pointed out in \cite[Proposition 2.3]{CG} that if $ \lambda \neq 0 $, this polynomial map
 is injective and thus satisfies the conclusion of WMYC. In fact, if we consider $\gamma = \frac{1}{\lambda},$
we can find explicitly the  inverse of $ \lambda I + (P,Q,0) $, with $ P$ and $ Q $ as in (\ref{normal}). Since $ f \in \R[z][t] $, in what follows we replace $ f(t) $ by $ f(t,z) $.

\begin{prop} For $ \lambda \neq 0 $, the inverse of $ \lambda I + (P,Q,0) $, with $ P$ and $ Q $ as in (\ref{normal}), is $ \gamma I + (R,S,0) $, where $\gamma = \frac{1}{\lambda}$ and
 $$
\begin{array}{rcl}
R & = & \gamma  b(\gamma z) \,  f\Big(\gamma \{a(\gamma z) \, x + b(\gamma z) \, y- a(\gamma z) c(\gamma z) - b(\gamma z)d(\gamma z)\},\gamma z\Big) - \gamma c(\gamma z)\\
S & = &  -\gamma a(\gamma z) \,  f\Big(\gamma \{a(\gamma z) \, x + b(\gamma z) \, y- a(\gamma z) c(\gamma z) - b(\gamma z)d(\gamma z)\},\gamma z\Big) - \gamma d(\gamma z).
\,
\end{array}
$$
\end{prop}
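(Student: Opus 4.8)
The plan is to invert $G := \lambda I + (P,Q,0)$ by solving $G(x,y,z) = (X,Y,Z)$ directly for $(x,y,z)$, exploiting the fact that $P$ and $Q$ are built from a single scalar function $f$ evaluated at the common argument $w := a(z)x + b(z)y$. With this notation the system reads
\begin{eqnarray*}
X & = & \lambda x - b(z) f(w,z) + c(z), \\
Y & = & \lambda y + a(z) f(w,z) + d(z), \\
Z & = & \lambda z,
\end{eqnarray*}
and the third equation gives at once $z = \gamma Z$, so that every coefficient $a(z), b(z), c(z), d(z)$ may be read as $a(\gamma Z)$, and so on.

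The key step is to form the combination $a(z) X + b(z) Y$. Substituting the first two equations, the two occurrences of $f(w,z)$ are multiplied by $-a(z)b(z)$ and $+a(z)b(z)$ and therefore cancel, leaving the purely linear relation
$$a(z) X + b(z) Y = \lambda\big(a(z) x + b(z) y\big) + a(z)c(z) + b(z)d(z) = \lambda w + a(z)c(z) + b(z)d(z).$$
Solving for $w$ and inserting $z = \gamma Z$ yields
$$w = \gamma\big\{a(\gamma Z) X + b(\gamma Z) Y - a(\gamma Z)c(\gamma Z) - b(\gamma Z)d(\gamma Z)\big\},$$
which is exactly the argument appearing inside $f$ in the stated formulas for $R$ and $S$. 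In particular $f(w,z)$ is now an explicit function of $(X,Y,Z)$.

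With $f(w,z)$ known, I would return to the first two equations, isolate $\lambda x$ and $\lambda y$, and multiply by $\gamma$ to obtain
\begin{eqnarray*}
x & = & \gamma X + \gamma b(\gamma Z) f(w,z) - \gamma c(\gamma Z), \\
y & = & \gamma Y - \gamma a(\gamma Z) f(w,z) - \gamma d(\gamma Z),
\end{eqnarray*}
together with $z = \gamma Z$. Relabelling the image variables $(X,Y,Z)$ as $(x,y,z)$, these read $\gamma x + R$, $\gamma y + S$ and $\gamma z$, so the candidate inverse is exactly $\gamma I + (R,S,0)$. To see it is a genuine (two-sided) inverse I would verify that the constructed point satisfies $a(\gamma Z)x + b(\gamma Z)y = w$, which holds automatically since $w$ was produced by the same combination $a X + b Y$; this guarantees $G(x,y,z) = (X,Y,Z)$. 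The only step that is not pure bookkeeping is the cancellation of the $f$-terms in $a(z)X + b(z)Y$: it is what reduces the inversion to a single linear equation for $w$, and it stems directly from $(P,Q)$ having the form $(-bf,\,af)$ along the direction $(a,b)$.
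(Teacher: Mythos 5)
Your proposal is correct and follows essentially the same route as the paper's proof: both form the combination $a(z)\,(\text{first component}) + b(z)\,(\text{second component})$ so that the $f$-terms cancel, solve the resulting linear relation for the argument $a(z)x+b(z)y$ of $f$, substitute $z=\gamma Z$, and then back-substitute into the first two equations to isolate $x$ and $y$. The only difference is cosmetic (variable names and your added remark on the two-sided inverse check), so no further comparison is needed.
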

\begin{proof} Putting
$$
\begin{array}{rcl}
u &=&\lambda x -b(z) \, f(a(z) \, x + b(z) \, y,z) \su c(z)\\\\
v &=& \lambda y +  a(z) \, f(a(z) \, x + b(z) \, y,z) \su d(z)\\\\
w &=& \lambda z \, ,
\end{array}
$$
we obtain
\begin{equation}\label{inversa}
a(z)u+b(z)v = \lambda(a(z)x+b(z)y) + a(z)c(z) + b(z) d(z) \, ,
\end{equation}
which implies
\begin{displaymath}
a(z)x+b(z)y = \gamma \big( a(z)u+b(z) v - a(z)c(z) - b(z)d(z) \big).
\end{displaymath}

Therefore, by consider that $z = \gamma w,$ we can deduce

$$
\begin{array}{lcl}
 u & = & \lambda x - b(\gamma w) \, f(m(u,v,w),\gamma w) \su c(\gamma w)\\\\
v & = &  \lambda y +  a(\gamma w) \, f(m(u,v,w),\gamma w)\su d(\gamma w),
\end{array}
$$
with $ \; m(u,v,w) = \gamma \big\{ a(\gamma w)u+b(\gamma w) v - a(\gamma w)c(\gamma w) - b(\gamma w)d(\gamma w) \big\} $.
Finally,

$$
\begin{array}{lcl}
 x & = &  \gamma (  u +b(\gamma w) \, f(m(u,v,w),\gamma w)  - c(\gamma w) ) \\\\
 y & = & \gamma ( v -a(\gamma w) \, f(m(u,v,w),\gamma w) - d(\gamma w) ).\\\\
\end{array}
$$

\end{proof}

\begin{rem}
Let $ F = \lambda I + H : \R^3 \to \R^3 $  be a polynomial map verifying $ H(0) = 0 $, $ JH $ nilpotent and their components are linearly dependent over $ \R $. If
$ \lambda < 0 $ (resp. $ \lambda = 1 $), then the map $F$ is an example to WMYC (resp. the Jacobian conjecture).
\end{rem}

\section{Examples of almost global stability}

In \cite[Theorem 3.2,Theorem 3.5]{CG} we prove that the vector fields in $ \R^3 $ of the form $ F = \lambda I  + H $, with $ \lambda < 0 $, $ H $ as
in (\ref{ind}) and $ g(t) = A_1 \, t + A_2 \, t^2 $, are counterexamples to {\textbf{MYC}} since they have unbounded orbits. Moreover, by following the
respective proofs, we can deduce the existence of an open set of initial states whose trajectories do not converge to the origin.
Thus the origin for these vector fields
is not almost global attractor in the following sense:

\begin{definition} \label{3.1}
Consider the differential equation
\begin{equation} \label{sistema}
\dot{x} = F(x)
\end{equation}
 where $F:\R^n \to \R^n$ is a $ C^1-$map and $ F(0) = 0.$ We say the origin is an almost global attractor if all trajectories, except for a set of initial states with zero Lebesgue measure,
 converge to the origin.
\end{definition}

From Definition \ref{3.1}, it arises the following question:

\medskip

\noindent {\bf Question 1.} Do there exist counterexamples to the {\textbf{MYC}} with the origin almost global attractor ?

\medskip

In \cite{R}, A. Rantzer introduces a new tool, namely, the density functions in order to obtain  sufficient conditions for almost global stability of an equilibrium point for a $ C^1-$vector field in $ \mathbb{R}^n $.
\begin{definition}
\label{density}
A density function of \textnormal{(\ref{sistema})} is a $C^{1}$ map
$\rho\colon \mathbb{R}^{n}\setminus \{0\}\to [0,+\infty)$, integrable outside a ball
centered at the origin that satisfies
\begin{displaymath}
[\triangledown \cdot \rho F](x) >0
\end{displaymath}
almost everywhere with respect to $\mathbb{R}^{n},$ where
$$
\triangledown \cdot [\rho F] = \triangledown \rho \cdot F + \rho[\triangledown\cdot F],
$$
and $\triangledown \rho$, $\triangledown \cdot F$ denote respectively the gradient of $\rho$ and the divergence
of $F$.
\end{definition}
The main result of A. Rantzer \cite{R} is the following.
\begin{teo}
\label{Rantzer}
Given the differential system
$$\dot{x} = F(x),$$
where $F \in C^1, \, F(0) = 0$, suppose there exists a density function $\rho : \R^n \setminus \{0\} \to [0, +\infty)$ such
that $ \rho (x) F(x) / \norm{x}$   is integrable on $\{x \in \R^n: \norm{x} \geq 1\}.$
Then, almost all trajectories converge to the origin, i.e., the
origin is almost globally stable.
\end{teo}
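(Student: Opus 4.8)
The plan is to read the divergence condition $[\triangledown\cdot(\rho F)](x)>0$ of Definition~\ref{density} as a measure\-/transport (Liouville) statement and to derive a contradiction from the finiteness of the total $\rho$\-/mass available outside the unit ball. Write $\phi^t$ for the local flow of $\dot x=F(x)$ and assume, contrary to the claim, that the set $S$ of initial conditions whose trajectory does not converge to the origin has positive Lebesgue measure. The whole proof is then an interplay between ``the flow keeps creating $\rho$\-/mass'' and ``there is only finitely much $\rho$\-/mass to go around in the far field.''

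First I would establish the transport identity. For a bounded measurable set $Z$ with $0\notin\overline{Z}$, the change of variables $x=\phi^t(y)$ gives $\int_{\phi^t(Z)}\rho\,dx=\int_Z \rho(\phi^t(y))\,\det D_y\phi^t(y)\,dy$. Combining Liouville's formula $\frac{d}{dt}\det D_y\phi^t=(\triangledown\cdot F)(\phi^t)\,\det D_y\phi^t$ with $\frac{d}{dt}\rho(\phi^t)=\triangledown\rho(\phi^t)\cdot F(\phi^t)$ yields
$$\frac{d}{dt}\Big(\rho(\phi^t(y))\,\det D_y\phi^t(y)\Big)=[\triangledown\cdot(\rho F)](\phi^t(y))\,\det D_y\phi^t(y).$$
Since the Jacobian stays positive, $t\mapsto\int_{\phi^t(Z)}\rho\,dx$ is nondecreasing, and strictly increasing once $Z$ has positive measure because $\triangledown\cdot(\rho F)>0$ almost everywhere.

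Second, I would quantify the increase. Integrating the displayed identity in $t$ and then in $y$ over $Z$ gives
$$\int_{\phi^\tau(Z)}\rho\,dx-\int_Z\rho\,dx=\int_0^\tau\!\!\int_{\phi^t(Z)}[\triangledown\cdot(\rho F)]\,dx\,dt.$$
The goal is to choose $Z$ as a positive\-/measure subset of $S$ sitting in an annulus $\{a\le\|x\|\le b\}$ so that the right\-/hand side diverges as $\tau\to\infty$: a non\-/convergent trajectory spends unbounded total time in regions bounded away from the origin, where $\triangledown\cdot(\rho F)$ has a positive lower bound, so the accumulated creation of $\rho$\-/mass is infinite. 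On the other hand, the hypotheses cap the left\-/hand side: $\rho$ is integrable outside a ball and $\rho F/\|x\|$ is integrable on $\{\|x\|\ge1\}$, so as long as the flowed set $\phi^\tau(Z)$ is kept away from the origin, $\int_{\phi^\tau(Z)}\rho\,dx$ stays bounded. The divergence of the right side against the boundedness of the left then forces $\mathrm{meas}(S)=0$.

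The main obstacle is the bookkeeping of $\rho$\-/mass that crosses the boundaries of the region, in \emph{both} directions. A non\-/convergent trajectory may repeatedly enter and leave any fixed ball, so I cannot simply keep $\phi^t(Z)$ inside one annulus; moreover $\rho$ need not be integrable near the origin, so transient excursions of $\phi^\tau(Z)$ toward $0$ threaten the upper bound, while escapes to infinity must be absorbed by the tail integrability of $\rho F/\|x\|$. Making the ``infinite lingering time $\Rightarrow$ infinite created mass'' step rigorous therefore requires a careful extraction---via a Fubini argument on the occupation time in a good annulus---of a positive\-/measure subset of $S$ whose forward images contribute a uniformly positive amount at arbitrarily large times while remaining bounded away from $0$. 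A secondary technical point is justifying the Liouville computation and the differentiation under the integral when $\rho$ is only $C^1$ off the origin and $\triangledown\cdot(\rho F)>0$ holds merely almost everywhere; this is handled by an absolute\-/continuity/approximation argument rather than pointwise differentiation.
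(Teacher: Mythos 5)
First, a caveat about the comparison itself: the paper does not prove this theorem at all --- it is quoted as the main result of Rantzer's paper [R], so the only meaningful benchmark is Rantzer's original argument. Your plan reproduces its first ingredient faithfully: the identity
$$\frac{d}{dt}\Bigl(\rho(\phi^t(y))\,\det D_y\phi^t(y)\Bigr)=[\nabla\cdot(\rho F)](\phi^t(y))\,\det D_y\phi^t(y),$$
integrated over a set $Z$, is precisely Rantzer's Lemma A.1, and the intended contradiction (monotone creation of $\rho$-mass versus finiteness of the mass available away from the origin) is the right overall shape.

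As a proof, however, the proposal does not close, and the decisive gap is one you flag yourself without resolving. Your contradiction needs the left-hand side $\int_{\phi^\tau(Z)}\rho\,dx$ to stay bounded, which you justify ``as long as $\phi^\tau(Z)$ is kept away from the origin''; but for the only hard case --- trajectories that oscillate, re-entering every small ball infinitely often --- this is exactly what fails, and near the origin $\rho$ is typically \emph{not} integrable (it blows up at $0$ in every example of this paper), so no a priori bound on $\int_{\phi^\tau(Z)}\rho$ exists. Repairing this requires a stopping-time construction (following each point only until its first entry into $\{\|x\|\le\epsilon\}$) or explicit accounting of the flux of $\rho F$ across the sphere $\{\|x\|=\epsilon\}$; announcing ``a careful extraction via a Fubini argument'' is where the actual content of Rantzer's proof lives, not a substitute for it. Two further steps are wrong or missing as stated. (i) $\nabla\cdot(\rho F)$ need not ``have a positive lower bound'' on an annulus: it is only continuous and positive almost everywhere, so its infimum on a compact set can be $0$; what one can prove (and must, separately) is that every subset of the annulus of Lebesgue measure at least $m$ has divergence-integral at least some $c(m)>0$. (ii) ``Non-convergent trajectories spend unbounded total time bounded away from the origin'' fails for solutions with finite escape time, and for solutions drifting to infinity the region they linger in is unbounded, where no such lower bound on the divergence is available; both cases are precisely where the hypothesis that $\rho F/\|x\|$ is integrable on $\{\|x\|\ge1\}$ must be deployed concretely (e.g.\ via $\bigl|\frac{d}{dt}\ln\|\phi^t\|\bigr|\le\|F(\phi^t)\|/\|\phi^t\|$), yet your sketch never actually uses that hypothesis beyond saying it ``absorbs'' escapes. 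So the skeleton matches Rantzer's, but the proposal leaves unproved exactly the steps that constitute the theorem's difficulty.
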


In this context, the next question arises naturally:

\noindent {\bf Question 2.} Do there exist counterexamples  to the {\textbf{MYC}} that support density functions ?

In this paper the best answer we give is the following family of almost Hurwitz vector fields; i.e. of vector fields for which the Hurwitz condition hold over  $ \R^n - A $ with  $ A $ a zero Lebesgue measure set.
\begin{teo}\label{123}
Consider the real numbers  $  c \leq a < 0, \, b \in \R, \, k\geq 1,$ and the polynomial $R(z) = \sum\limits_{i=1}^k a_{2i} z^{2i}$ with $a_{2i}>0$ for  $i=1, \ldots, k.$  Then
\begin{equation}
\label{casihurwitz}
 F(x,y,z) = (y,-x,0) +((ax + by)R(z),(-bx + cy)R(z),-z R(z))
\end{equation} is an almost Hurwitz vector field. Moreover, this vector field  has associated the density function $\rho(x,y,z) = (x^2+y^2+R(z))^{-\alpha}$ with $ \; \alpha > \max\{2,
\frac{a+c-1-2k}{2a}, \frac{3-a-c}{2}\} $.
\end{teo}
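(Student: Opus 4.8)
The plan is to separate the two assertions: the spectral (almost-Hurwitz) claim, which is a direct eigenvalue computation, and the density-function claim, which is where the work lies.

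First I would address the almost-Hurwitz property by computing $JF$ and exploiting its block structure. Writing $R=R(z)$ and $R'=R'(z)$, the third row of $JF$ is $(0,0,-(R+zR'))$, so $JF$ is block-triangular and its spectrum is $\{-(R+zR')\}$ together with the spectrum of
\[
M=\begin{pmatrix} aR & 1+bR \\ -(1+bR) & cR \end{pmatrix},
\]
for which $\operatorname{tr}M=(a+c)R$ and $\det M=acR^2+(1+bR)^2$. Since $c\le a<0$ gives $ac>0$ and $a+c<0$, while $R(z)=\sum_{i=1}^k a_{2i}z^{2i}\ge 0$ vanishes only at $z=0$, I obtain $\det M>0$ everywhere and $\operatorname{tr}M<0$ exactly for $z\neq 0$. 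Likewise $R+zR'=\sum_{i=1}^k(2i+1)a_{2i}z^{2i}>0$ iff $z\neq0$, so the third eigenvalue is negative off $z=0$. Hence $F$ is Hurwitz on $\R^3\setminus\{z=0\}$; as $\{z=0\}$ has zero Lebesgue measure, $F$ is almost Hurwitz (at $z=0$ the eigenvalues are $\pm i,0$, so this plane is exactly the exceptional set).

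For the density function, set $W=x^2+y^2+R(z)$, so $\rho=W^{-\alpha}$ with $W>0$ on $\R^3\setminus\{0\}$, giving smoothness there. The core is the divergence. I would compute $\nabla\cdot F=(a+c-1)R-zR'$ and $\nabla\rho\cdot F=-\alpha W^{-\alpha-1}(2xF_1+2yF_2+R'F_3)$, noting that the rotational part $(y,-x,0)$ cancels so that $2xF_1+2yF_2+R'F_3=2R(ax^2+cy^2)-zRR'$. Assembling and substituting $W=x^2+y^2+R$ yields $\nabla\cdot(\rho F)=W^{-\alpha-1}\Psi$ with
\[
\Psi = C_x\,x^2 + C_y\,y^2 + (a+c-1)R^2+(\alpha-1)zRR',
\]
where $C_x=[(a+c-1)-2\alpha a]R-zR'$ and $C_y=[(a+c-1)-2\alpha c]R-zR'$.

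The hard part — the main obstacle — is proving $\Psi\ge0$ with strict inequality off $z=0$, because $C_x,C_y$ carry the negative term $-zR'$ and the constant part carries the negative term $(a+c-1)R^2$. The decisive observation is the elementary pair of inequalities $2R\le zR'\le 2kR$, valid since $zR'=\sum_{i=1}^k 2i\,a_{2i}z^{2i}$ with $2\le 2i\le 2k$. Using $zR'\le 2kR$ gives $C_x\ge[(a+c-1-2k)-2\alpha a]R$, nonnegative precisely when $\alpha\ge\frac{a+c-1-2k}{2a}$; and $c\le a$ forces $C_y\ge C_x$, so the same bound controls $C_y$. Using $zR'\ge 2R$ together with $\alpha>1$ gives $(a+c-1)R^2+(\alpha-1)zRR'\ge(2\alpha+a+c-3)R^2$, which is nonnegative exactly when $\alpha\ge\frac{3-a-c}{2}$ and strictly positive whenever $R>0$. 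Thus the three stated lower bounds on $\alpha$ are exactly calibrated to make each piece of $\Psi$ nonnegative, and for $z\neq0$ the constant part is strictly positive, so $\nabla\cdot(\rho F)>0$ almost everywhere.

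Finally I would verify integrability. Integrating in $(x,y)$ by polar coordinates gives $\int_{\R^2}W^{-\alpha}\,dx\,dy=\frac{\pi}{\alpha-1}R(z)^{1-\alpha}$ for $\alpha>1$, and since $\|F\|/\|x\|\lesssim 1+R$ one similarly controls $\int_{\R^2}\rho\,\|F\|/\|x\|\,dx\,dy$ by $R(z)^{2-\alpha}$ up to constants. Because $R(z)\sim a_{2k}z^{2k}$ as $|z|\to\infty$, the remaining $z$-integrals converge once $\alpha$ exceeds the stated thresholds (the binding contribution being the $|z|\to\infty$ tail), so $\rho$ is integrable outside a ball and $\rho F/\norm{x}$ is integrable on $\{\norm{x}\ge 1\}$ as required by Definition \ref{density} and Theorem \ref{Rantzer}. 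Rantzer's Theorem \ref{Rantzer} then gives almost global stability of the origin, completing the proof.
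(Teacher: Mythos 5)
Your proof is correct and follows essentially the same route as the paper: the identical block-triangular eigenvalue analysis for the almost-Hurwitz claim, the identical expansion of $\nabla\cdot(\rho F)$ into the three groups $C_x\,x^2+C_y\,y^2+\big[(a+c-1)R^2+(\alpha-1)zRR'\big]$, and your inequalities $2R\le zR'\le 2kR$ are exactly the paper's coefficient-wise positivity conditions ($a+c-1+2(\alpha-1)i>0$, $\;a+c-1-2\alpha a-2i>0$, $\;a+c-1-2\alpha c-2i>0$ for $i=1,\dots,k$) evaluated at their binding indices $i=1$ and $i=k$. One small repair is needed in your integrability check: the iterated integral $\int_{\R}\frac{\pi}{\alpha-1}R(z)^{1-\alpha}\,dz$ that your computation produces diverges as $z\to 0$ (there $R(z)^{1-\alpha}\sim a_2^{1-\alpha}z^{2-2\alpha}$ with $2-2\alpha<-1$), so for small $\abs{z}$ you must use that outside the unit ball the $(x,y)$-integration runs over $x^2+y^2\ge 1-z^2$ rather than all of $\R^2$, which makes those slices uniformly bounded; this is a one-line fix of a step the paper itself only asserts.
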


\begin{proof}
The Jacobian matrix of $F$ in a point $ (x,y,z) $ is
$$
JF(x,y,z) = \left (\begin{array}{ccc}
a R(z) & 1 +  b R(z)& *\\
-(1+b R(z)) &c  R(z)&  *\\
0 & 0 & - (R(z) + zR'(z))
\end{array}
\right ).
$$

Then $ \; \lambda_3 = - (R(z) + zR'(z)) $ is an eigenvalue. The others two eigenvalues $ \lambda_1, \lambda_2 $ verify
$$ \lambda_1  + \lambda_2 = (a + c) \; R(z) \quad \textrm{and} \quad \lambda_1 \lambda_2 = ac R(z)^2 + (1 + b R(z))^2 \, . $$
Therefore, for $ z \neq 0 $ (resp. $ z = 0 $),  we have $ \lambda_3 < 0 $ (resp. $ \lambda_3 = 0 $) and $ \lambda_1 $ and  $ \lambda_2 $ have negative real part (resp. null real part).
In addition, $F$ verifies the Hurwitz condition except in the invariant plane $z=0.$

In what follows we prove that
the map $$\rho(x,y,z) = (x^2 + y^2+ R(z))^{-\alpha} \, , $$
under the conditions of the theorem is a density function for the vector field $ F $.
The condition  $\alpha > 2$ ensures the integrability  of $ \rho(x,y,z)  $ outside the ball centered at the origin of radius one.

It remains to prove that $\nabla \cdot (\rho F)(x,y,z)$ is positive almost everywhere in $\R^3.$ We have
$$ \triangledown \rho (x,y,z) = \frac{-\alpha}{(x^2+y^2+R(z))^{\alpha + 1}} \; (2 x, 2 y, R'(z))  \, , $$
and
$$ [\triangledown\cdot F] (x,y,z) = (a + c -1) R(z) - z R'(z) \, . $$
Then
\begin{eqnarray*}
[\nabla \cdot \rho F](x,y,z) & = & (\triangledown \rho \cdot F) (x,y,z) + \rho(x,y,z) \; [\triangledown\cdot F] (x,y,z) \\
& = & \frac{-\alpha \, R(z)}{(x^2+y^2+R(z))^{\alpha +1}} \, [2(a x^2 + c y^2) - z \, R'(z)] \\
&  & +\frac{1}{(x^2+y^2+R(z))^{\alpha}} \, [(a + c -1) \, R(z) - z \, R'(z)] \\
& = & \frac{1}{(x^2+y^2+R(z))^{\alpha + 1}} \, \left[-2 \alpha (a x^2 + c y^2) R(z) + \alpha z R(z) R'(z) \right. \\
&  & + \left. (x^2+y^2+R(z)) \, [(a + c -1) \, R(z) - z \, R'(z)]\right] \\
& = & \frac{1}{(x^2+y^2+R(z))^{\alpha + 1}} \, \left[((a + c - 1) R(z) + (\alpha - 1) z R'(z)) R(z) \right. \\
&  & + ((a + c - 1 - 2 \alpha a) R(z) - z R'(z)) x^2  \\
&  & +  \left.((a + c - 1 - 2 \alpha c) R(z) - z R'(z)) y^2 \right] \, .
\end{eqnarray*}
Since $ R(z) = \sum\limits_{i=1}^k a_{2i} z^{2i}$  and $ z R'(z) = \sum\limits_{i=1}^k 2 i a_{2i} z^{2i}$ with $a_{2i}>0$ for  $i=1, \ldots, k $, we obtain $ [\triangledown\cdot F] (x,y,z) > 0 $ for $ z \neq 0 $ and $ [\triangledown\cdot F] (x,y,z) = 0 $ for $ z = 0 $, if
$$ a + c - 1 + 2 (\alpha - 1) i > 0 \, , \quad   a + c - 1 - 2 \alpha a - 2 i > 0 \, , \quad   a + c - 1 - 2 \alpha c - 2 i > 0 \, , $$
for $i=1, \ldots,k $. Then, the proof is finished due to these inequalities are consequence of our hypothesis $ \; \alpha > \max\{2,
\frac{a+c-1-2k}{2a}, \frac{3-a-c}{2}\} $.
\end{proof}

This family is a generalization of the following example of R. Potrie and P. Monz\'on (\cite{PM})

\begin{equation}\label{PM}
F(x,y,z) = (y -2xz^2,-x-2yz^2, -z^3 ) \, ,
\end{equation}
which has density function $ \rho(x,y,z) = (x^2 + y^2 + z^2)^{-4} $.

\begin{cor} \label{1234} The vector field $ F $ given by (\ref{casihurwitz}) under the conditions of Theorem \ref{123} has the origin as an almost global attractor which is not locally asymptotically stable.
\end{cor}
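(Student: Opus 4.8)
The plan is to obtain the two assertions separately: almost global attraction from Rantzer's criterion (Theorem~\ref{Rantzer}) applied to the density function already produced in Theorem~\ref{123}, and the failure of asymptotic stability from the center structure of $F$ on the invariant plane $z=0$.

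For the global statement, Theorem~\ref{123} shows that $\rho(x,y,z)=(x^2+y^2+R(z))^{-\alpha}$ is a density function of $F$: it is positive and $C^1$ on $\R^3\setminus\{0\}$ (the quantity $x^2+y^2+R(z)$ vanishes only at the origin, since every $a_{2i}>0$), it is integrable outside the unit ball, and $\nabla\cdot(\rho F)>0$ almost everywhere. To invoke Theorem~\ref{Rantzer} the only remaining hypothesis is that $\rho F/\norm{(x,y,z)}$ be integrable on $\{\norm{(x,y,z)}\ge1\}$, and I would check this by a growth--versus--decay estimate. Writing $r^2=x^2+y^2$, each component of $F$ is bounded by a constant multiple of $r+rR(z)+|z|R(z)$, so in cylindrical coordinates the integrand is dominated by $(r^2+R(z))^{-\alpha}\,[\,r+rR(z)+|z|R(z)\,]$ (the factor $1/\norm{(x,y,z)}\le1$ being harmless). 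Integrating in $r$ first, every term reduces to a constant multiple of $R(z)^{1-\alpha}$ or $R(z)^{2-\alpha}$; near $z=0$ the constraint $\norm{(x,y,z)}\ge1$ keeps $r$ bounded away from $0$ and the integrand bounded, so convergence is an issue only as $|z|\to\infty$, where $R(z)\sim a_{2k}z^{2k}$ and the slowest term imposes $2k(\alpha-2)>1$, i.e. $\alpha>2+\tfrac1{2k}$.

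The key point is that the hypotheses of Theorem~\ref{123} already guarantee $\alpha>2+\tfrac1{2k}$. Writing $a=-|a|$, $c=-|c|$ with $|c|\ge|a|$, one has $\frac{a+c-1-2k}{2a}\ge 1+\frac{1+2k}{2|a|}$ and $\frac{3-a-c}{2}\ge\frac32+|a|$; minimizing the maximum of these two lower bounds over $|a|>0$ (the two expressions balance at $|a|=\tfrac{-1+\sqrt{9+16k}}{4}$) gives the common value $\tfrac{5+\sqrt{9+16k}}{4}$, which is $\ge 2+\tfrac1{2k}$ for every $k\ge1$. Hence $\alpha>\max\{2,\frac{a+c-1-2k}{2a},\frac{3-a-c}{2}\}\ge 2+\tfrac1{2k}$, the integral converges, Theorem~\ref{Rantzer} applies, and almost every trajectory converges to the origin; by Definition~\ref{3.1} the origin is an almost global attractor.

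For the local statement I would use that $z=0$ is invariant, since $\dot z=-zR(z)$ vanishes there, and that $R(0)=0$ reduces $F$ on this plane to the linear center $\dot x=y$, $\dot y=-x$, whose orbits are the circles $x^2+y^2=\text{const}$. Thus arbitrarily close to the origin there are periodic orbits that stay at a fixed positive distance from it, so the origin is not locally asymptotically stable (consistently, $JF(0)$ has spectrum $\{\,i,-i,0\,\}$). I expect the integrability verification --- in particular recognizing that the prescribed lower bound on $\alpha$ is exactly what defeats the $z^{2k}$ growth of $R(z)$ in the $z$--direction --- to be the only delicate step; the center argument is immediate.
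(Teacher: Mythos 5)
Your overall architecture is the same as the paper's: the failure of local asymptotic stability comes from $F(x,y,0)=(y,-x,0)$ being a linear center on the invariant plane $z=0$, and almost global attraction comes from Theorem~\ref{Rantzer} once $\rho F/\norm{(x,y,z)}$ is shown to be integrable on $\{\norm{(x,y,z)}\ge 1\}$. The problem is in the integrability verification, which you yourself single out as the delicate step: as written it contains two errors that happen to cancel, and the literal argument is unsound. First, the factor $1/\norm{(x,y,z)}$ is \emph{not} harmless. If you genuinely discard it, the statement you are left to prove --- integrability of $\rho\norm{F}$ outside the unit ball --- is false for admissible parameters: take $k=1$, $a=c=-1$, $b=0$, $a_2=1$, so Theorem~\ref{123} only requires $\alpha>5/2$, and take $\alpha\in(5/2,3]$. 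Then $\norm{F}\ge r z^{2}$ with $r^2=x^2+y^2$, and in cylindrical coordinates (with the Jacobian $r$, which your computation omits)
\begin{displaymath}
\int\!\!\int_{r^2+z^2\ge 1}(r^2+z^2)^{-\alpha}\, r z^{2}\cdot r\,dr\,dz \;\sim\; \int_1^{\infty}s^{5-2\alpha}\,ds \;=\;\infty
\quad\text{for }\alpha\le 3 .
\end{displaymath}
Second, your reported outcomes $R^{1-\alpha}$, $R^{2-\alpha}$ of the $r$-integration are only consistent with integrating $dr$ without the volume element $r\,dr\,d\theta\,dz$; restoring the Jacobian turns the three terms into $R^{3/2-\alpha}$, $R^{5/2-\alpha}$, $\abs{z}R^{2-\alpha}$, whose binding constraint is $\alpha>5/2+\tfrac{1}{2k}$ --- and this is \emph{not} supplied by the hypotheses (your own minimization shows they only guarantee $\alpha>\tfrac{5+\sqrt{9+16k}}{4}$, which equals $5/2$ at $k=1$). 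The two slips cancel in the exponent count, which is why you land on the threshold $2+\tfrac{1}{2k}$, but a chain of inequalities passing through a false intermediate statement is not a proof.

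The repair is to keep the division by $\norm{(x,y,z)}$ and let it do its job: since $\norm{(x,y,z)}\ge r$, the Jacobian is absorbed, $\rho\norm{F}\,r/\norm{(x,y,z)}\le \rho\norm{F}$, and then your $r$-integrals $R^{1-\alpha}$, $R^{2-\alpha}$, $\abs{z}R^{3/2-\alpha}$ are legitimate, giving precisely the threshold $\alpha>2+\tfrac{1}{2k}$ (equivalently, one may bound $rR/\norm{(x,y,z)}\le R$ and $\abs{z}R/\norm{(x,y,z)}\le R$ before integrating). With that fixed, your closing parameter analysis --- that $\max\{2,\tfrac{a+c-1-2k}{2a},\tfrac{3-a-c}{2}\}\ge\tfrac{5+\sqrt{9+16k}}{4}\ge 2+\tfrac{1}{2k}$, with equality at $k=1$ --- is correct and is actually a worthwhile supplement, since the paper asserts the corresponding implication without proof. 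This repaired argument is in substance the paper's: the paper never drops the division, but instead bounds
\begin{displaymath}
\frac{\norm{F}^2\rho^2}{\norm{(x,y,z)}^2}\;\le\;\frac{1}{(x^2+y^2+R)^{2\alpha}}+\frac{k_1}{(x^2+y^2+R)^{2\alpha-2}}+\frac{k_2}{(x^2+y^2+R)^{2\alpha-1}}\, ,
\end{displaymath}
i.e.\ it uses $x^2+y^2+z^2$ in the denominator to cancel the quadratic growth of $\norm{F}^2$ algebraically, before any integration. Your treatment of the local part (periodic orbits of the center accumulating at the origin) agrees with the paper's one-line argument and is fine.
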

\begin{proof}  We have $ F(x,y,0) = (y,-x,0) $, then the origin is not locally asymptotically stable. On the other hand, to prove that the origin is almost global attractor we use  Rantzer's result (Theorem \ref{Rantzer}). Then it is sufficient to show that the  condition  $\alpha > 2$ ensures the integrability  of $ \rho(x,y,z) F(x,y,z)/ \norm{(x,y,z)} $ outside the ball centered at the origin of radius one. In fact, if we consider
$k_0 = \max \{1,a_1\} $,  $ k_1 = \max\{1, b^2+c^2 + \abs{b} (a-c)\}$ and $ k_2 = 2 \abs{b} +  a-c $, we have  that

\begin{eqnarray*}
\norm{F(x,y,z)}^2 & = & x^2 + y^2 + R(z)^2 \, \left[(a^2 + b^2) x^2 + (b^2 + c^2) y^2 + z^2 + 2 b (a - c)xy\right] \\
&  & + 2 R(z) \, \left[(a - c) xy + b (x^2 + y^2)\right] \\
& \leq &  x^2 + y^2 + R(z)^2 \, \left[(b^2 + c^2) (x^2 + y^2) + z^2 + \abs{b} (a - c) (x^2 + y^2)\right]  \\
&  & + R(z) (a - c + 2\abs{b}) (x^2 + y^2) \\
& \leq  &  (x^2+y^2+z^2) + k_1 (x^2+y^2+R(z))^2 (x^2+y^2+z^2)  \\
& & + \; k_2 (x^2+y^2+R(z))(x^2+y^2+z^2).
\end{eqnarray*}
and that $x^2 + y^2 + R(z) \geq k_0.$
Thus, this facts combined with the assumption over $\alpha$ imply that

\begin{eqnarray*}
\frac{\norm{F(x,y,z)}^2 \rho(x,y,z)^2}{\norm{(x,y,z)}^2} & \leq &  \frac{1}{(x^2+y^2+R(z))^{2 \alpha}} + \frac{k_1}{(x^2+y^2+R(z))^{2 \alpha-2}}  \\
& & + \frac{k_2}{(x^2+y^2+R(z))^{2 \alpha-1}} \, \cdot
\end{eqnarray*}
Therefore $ \rho(x,y,z) F(x,y,z)/ \norm{(x,y,z)} $ is integrable outside the ball centered at the origin of radius one.
\end{proof}

\begin{rem}
 An alternative --and very simple-- proof of Corollary \ref{1234} is obtained by considering
the fact that $\langle F(x,y,z), (x,y,z) \rangle = (a x^2 + b y^2 - z^2) R(z) < 0 $ for all $ z \neq 0 $ combined with the invariance of the plane $z=0$.
\end{rem}
 Motivated by the last remark, we have the following result.

 \begin{prop} \label{prop1} Let $ F : \R^3 \to \R^3 $ be a $ C^1-$vector field with $ F(0) = 0 $ such that the plane $ z = 0 $ is invariant and $ [\triangledown\cdot F] (x,y,z) < 0 $ for all $ z \neq 0 $. Suppose that there exists a positive $ C^1-$function $ \rho : \R^3-\{0\} \to \R $ satisfying:
 \begin{enumerate}
 \item[a)] $ [\triangledown\cdot \rho F] (x,y,z) > 0 $ for all $ z \neq 0 $,
 \item[b)] $ \lim_{p \to 0} \, \rho(p) = \infty $.
 \item[c)] $ \lim_{\|p\| \to \infty} \, \rho(p) = 0 $.
 \end{enumerate}
 Then, for any initial state $ \alpha(0) = (x(0),y(0),z(0)) $ verifying $ z(0) \neq 0 $,  the trajectory $ \alpha(t) $ exists for all $ t \in [0,\infty[ $ and tends to zero as $ t \to \infty $.
\end{prop}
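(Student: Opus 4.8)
The plan is to use the density $\rho$ as a strict Lyapunov-type quantity along trajectories that avoid the invariant plane. First observe that, since $F$ is $C^1$ and hence locally Lipschitz, solutions are unique; because $\{z=0\}$ is invariant, a trajectory with $z(0)\neq 0$ keeps $z(t)\neq 0$ (indeed $z(t)$ cannot change sign) on its whole maximal interval of existence. On such a trajectory I would differentiate $\rho$ using $\nabla\cdot(\rho F)=\nabla\rho\cdot F+\rho\,[\nabla\cdot F]$, so that
\[
\frac{d}{dt}\rho(\alpha(t))=\nabla\rho\cdot F=[\nabla\cdot(\rho F)]-\rho\,[\nabla\cdot F].
\]
The first term is positive by hypothesis (a) and the second is positive because $\rho>0$ while $[\nabla\cdot F]<0$ for $z\neq 0$; hence $\rho(\alpha(t))$ is strictly increasing. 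This single computation is what makes the two divergence hypotheses work together.

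Next I would deduce forward completeness. Writing $c_0:=\rho(\alpha(0))>0$, monotonicity gives $\rho(\alpha(t))\geq c_0$ for all $t\geq 0$, so the orbit stays in the superlevel set $\{\rho\geq c_0\}$. By condition (c) this set is bounded (outside a large ball one has $\rho<c_0$), and since $F$ is defined on all of $\R^3$ a bounded forward orbit cannot escape in finite time; thus $\alpha(t)$ exists for all $t\in[0,\infty)$. Uniqueness also prevents the orbit from reaching the equilibrium at the origin in finite time, so $\rho(\alpha(t))$ stays defined and increasing, with a limit $L:=\lim_{t\to\infty}\rho(\alpha(t))\in(c_0,+\infty]$.

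For convergence I would analyze the $\omega$-limit set $\Omega$ of the bounded orbit, which is nonempty, compact, connected and invariant; by continuity $\rho\equiv L$ on $\Omega$. If $L=+\infty$, then every point of $\Omega$ has $\rho=+\infty$, which by (b) forces $\Omega=\{0\}$ and hence $\alpha(t)\to 0$. If $L<+\infty$, then $\rho$ is constant on the invariant set $\Omega$, so $\frac{d}{dt}\rho\equiv 0$ there; by the displayed monotonicity this is impossible at any point with $z\neq 0$, whence $\Omega\subseteq\{z=0\}$, and finiteness of $\rho$ on $\Omega$ gives $0\notin\Omega$.

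The hard part is excluding this last scenario, namely a nontrivial $\omega$-limit set living in the invariant plane away from the origin. Here I would integrate the monotonicity identity: since $\int_0^\infty\frac{d}{dt}\rho\,dt=L-c_0<\infty$ and both $[\nabla\cdot(\rho F)]$ and $-\rho[\nabla\cdot F]$ are nonnegative along the orbit, Barbalat's lemma (the integrands are uniformly continuous, the orbit lying in a compact set) yields $[\nabla\cdot(\rho F)](\alpha(t))\to 0$ and $[\nabla\cdot F](\alpha(t))\to 0$, so $\Omega\subseteq\{z=0\}\cap\{\nabla\cdot F=0\}\cap\{\nabla\cdot(\rho F)=0\}$. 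To finish I would exploit that the orbit approaches $\Omega$ strictly from one side $z>0$ together with the volume contraction $[\nabla\cdot F]<0$ off the plane: a Floquet/return-map estimate along the recurrent planar motion (or a planar Poincar\'e--Bendixson analysis of $\Omega$) should contradict $z(t)\to 0$ and force $\Omega=\{0\}$. This exclusion of recurrent planar limit behaviour is the genuine obstacle; the monotonicity, completeness, and the $L=+\infty$ case are routine.
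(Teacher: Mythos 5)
Your opening computation, the forward-completeness argument, the $L=+\infty$ case, and the reduction to a planar $\omega$-limit set are all correct, and up to that point you follow essentially the same route as the paper: the paper works with $V=\rho^{-1}$ (extended by $V(0)=0$) and observes, exactly as you do, that hypothesis a) together with $[\triangledown\cdot F]<0$ off the plane forces $\triangledown\rho\cdot F>0$ there, so that $V$ is positive, radially unbounded (by c)), and strictly decreasing along your trajectory; boundedness, global existence, and the handling of the origin via b) then match your argument, with your increasing $\rho$ playing the role of the paper's decreasing $V$.

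The genuine gap is the one you flag yourself: excluding a nonempty compact invariant set $\Omega\subseteq\{z=0\}\setminus\{0\}$ on which $\rho\equiv L$. Your Barbalat step is sound and gives $\triangledown\cdot F\equiv 0$ and $[\triangledown\cdot\rho F]\equiv 0$ on $\Omega$, but the proposed finish (``a Floquet/return-map estimate \dots should contradict $z(t)\to 0$'') is a hope, not an argument: the hypotheses impose no condition whatsoever on the dynamics inside the plane beyond invariance, so nothing you have written rules out, say, a periodic orbit of the planar flow on which $\triangledown\cdot F=0$ and $\rho$ is constant, attracting your trajectory from $z>0$; any contradiction would require a new quantitative idea, and you do not supply one. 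You should know, however, that the paper's own proof is silent at precisely this point: after establishing its properties 1)--3) for $V$ it passes directly from ``the trajectory remains in a bounded sublevel set'' to ``tends to $0$ as $t\to\infty$''. Since the derivative of $V$ along the flow is only known to be negative off the plane, LaSalle-type reasoning yields exactly your dichotomy ($\Omega=\{0\}$, or $\Omega$ a compact invariant subset of the punctured plane) and nothing more, so the step you could not complete is also missing from the paper. In short: your write-up is the more careful of the two, it stalls exactly where the paper is silent, and neither text, as written, fully proves the stated proposition.
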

\begin{proof} Condition a) implies $ [\triangledown\cdot \rho F] (x,y,z) \geq 0 $ everywhere, and
$ (\triangledown \rho \cdot F) (x,y,z) > -\rho(x,y,z) \; [\triangledown\cdot F] (x,y,z) > 0 $ for all $ (x,y,z) $ with $ z \neq 0 $.

\noindent Let us consider the function $ V : \R^3 \to \R $ defined by $ V = \rho^{-1} $ outside the origin and $ V(0) = 0 $. Then $ V $ is continuous by b), it is $C^1 $ outside the origin and
\begin{enumerate}
\item[1)] $ V(p) \geq 0 $ for all $ p $ and $ V(p) = 0 $ if and only if $ p = 0 $.
\item[2)] $ (\triangledown V \cdot F) (x,y,z) = -\rho(x,y,z)^{-2} \, (\triangledown \rho \cdot F) (x,y,z) < 0 $, for $ z \neq 0 $.
\item[3)] $  \lim_{\|p\| \to \infty} \, V(p) = \infty $.
\end{enumerate}
Then, if $ \alpha(0)  = (x(0),y(0),z(0)) $ is a initial state with $ z(0) \neq 0 $, we have $ \frac{d}{dt} V(\alpha(t)) < 0 $ by 2). Then the trajectory $ \alpha(t) $ remains over the sublevel $ V(\alpha(0)) $, which is bounded by 3). Therefore, $ \alpha(t) $ exists for all $ t \in [0,\infty[ $ and tend to 0 as $ t \to \infty $.
\end{proof}

\begin{cor} \label{cor1} If in Proposition \ref{prop1} we have $ [\triangledown\cdot \rho F] (x,y,z) > 0 $ for all $ z \in \R $, then the origin is globally asymptotically stable.
\end{cor}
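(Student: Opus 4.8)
The plan is to recycle the Lyapunov function constructed in the proof of Proposition \ref{prop1} and to strengthen its orbital derivative estimate so that it becomes negative on the whole punctured space. Recall that there one puts $V = \rho^{-1}$ on $\mathbb{R}^3 \setminus \{0\}$ and $V(0) = 0$; hypotheses b) and c) make $V$ continuous on $\mathbb{R}^3$, positive definite, $C^1$ off the origin, and radially unbounded, while the computation there gives $\nabla V \cdot F = -\rho^{-2}\,(\nabla \rho \cdot F) < 0$ for every point with $z \neq 0$. Thus the only thing left to check is that the strengthened hypothesis $[\nabla \cdot \rho F] > 0$ for all $z \in \mathbb{R}$ forces $\nabla V \cdot F < 0$ on the invariant plane $z = 0$ as well.

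To handle that plane I would first observe that, since $F$ is $C^1$, the divergence $\nabla \cdot F$ is continuous, so the standing hypothesis $[\nabla \cdot F] < 0$ for $z \neq 0$ yields $[\nabla \cdot F](x,y,0) \leq 0$ by letting $z \to 0$. On the plane the identity $\nabla \rho \cdot F = [\nabla \cdot \rho F] - \rho\,[\nabla \cdot F]$ then gives $\nabla \rho \cdot F > 0$, because the first term is positive by the new hypothesis and the second term $-\rho\,[\nabla \cdot F]$ is nonnegative as $\rho > 0$. Hence $\nabla V \cdot F = -\rho^{-2}\,(\nabla \rho \cdot F) < 0$ on $z = 0$ too, and therefore on all of $\mathbb{R}^3 \setminus \{0\}$.

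It then remains to invoke the classical global Lyapunov (Barbashin--Krasovskii) theorem: $V$ is continuous and positive definite, radially unbounded, $C^1$ away from the origin, and has strictly negative orbital derivative off the origin. Every sublevel set $\{V \leq V(\alpha(0))\}$ is compact, so each trajectory exists for all $t \geq 0$ and, as $\frac{d}{dt}V(\alpha(t)) < 0$ unless $\alpha(t) = 0$, must converge to the origin; trajectories launched on the plane $z = 0$ stay there by invariance but are now covered by the same estimate. The one genuinely delicate step is the treatment of $z = 0$: the point is that $C^1$-regularity of $F$ upgrades the strict bound $[\nabla \cdot F] < 0$ off the plane to the non-strict bound $[\nabla \cdot F] \leq 0$ on it, precisely what is required to combine with the new global positivity of $[\nabla \cdot \rho F]$. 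Everything else is a routine application of the standard Lyapunov stability theorem.
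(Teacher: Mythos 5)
Your proof is correct and follows exactly the route the paper intends: the corollary is stated there without an explicit proof, the implicit argument being precisely to reuse the Lyapunov function $V=\rho^{-1}$ from Proposition \ref{prop1}, whose orbital derivative is now strictly negative on all of $\mathbb{R}^3\setminus\{0\}$, so the standard Lyapunov theorem gives global asymptotic stability. Your one added step --- using continuity of $\triangledown\cdot F$ (from $C^1$-regularity of $F$) to upgrade $[\triangledown\cdot F]<0$ off the plane to $[\triangledown\cdot F]\leq 0$ on $z=0$, which is what makes $\triangledown\rho\cdot F>0$ hold on the plane --- is exactly the point the paper glosses over, and you handle it correctly.
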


If we perturb the vector field $ F $ defined by (\ref{casihurwitz}) with $ \lambda I $ and $ \lambda < 0 $, then  $ G = \lambda I + F $ is a Hurwitz vector field, which is another example of {\bf MYC}, by the following
\begin{prop} \label{prop2} Let $ G = \lambda I + F $ with $ \lambda < 0 $ and $ F $ as in (\ref{casihurwitz}) that verify the conditions of Theorem \ref{123}. Then $ G $ is Hurwitz, admits the same density  function $ \rho(x,y,z) = (x^2 + y^2 + R(z))^{\alpha} $ but with $ \alpha > 3 $, and the origin is globally asymptotically stable.
\end{prop}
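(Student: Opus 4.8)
The plan is to verify the three assertions --- Hurwitz, density function, global asymptotic stability --- in that order, exploiting that $G = \lambda I + F$ merely shifts each eigenvalue of $JF$ by $\lambda$ and shifts the divergence by $3\lambda$.

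First I would establish the Hurwitz property. Since $JG = \lambda I + JF$, the eigenvalues of $JG(x,y,z)$ are $\lambda + \mu_i$ where $\mu_1,\mu_2,\mu_3$ are the eigenvalues of $JF$ computed in Theorem \ref{123}. There we saw $\mathrm{Re}(\mu_i) \leq 0$ everywhere, with equality only on $z = 0$. Adding $\lambda < 0$ pushes every real part strictly negative for all $(x,y,z)$, so $G$ is genuinely Hurwitz (not merely almost Hurwitz). This is the easy step.

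Next I would treat the density function. The key computation is $[\triangledown \cdot \rho G] = [\triangledown \cdot \rho F] + \lambda\,[\triangledown\cdot(\rho\,\mathrm{Id})]$, where $\triangledown\cdot(\rho(x,y,z))$ against the identity field yields $\triangledown\rho\cdot(x,y,z) + 3\rho$. With $\rho = (x^2+y^2+R(z))^{-\alpha}$ one has $\triangledown\rho\cdot(x,y,z) = -\alpha\,\rho\,(2x^2+2y^2+zR'(z))/(x^2+y^2+R(z))$, so the extra term contributes $\lambda\rho\bigl(3 - \alpha\cdot\tfrac{2x^2+2y^2+zR'(z)}{x^2+y^2+R(z)}\bigr)$. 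Reusing the expansion of $[\triangledown\cdot\rho F]$ already assembled in the proof of Theorem \ref{123}, I would collect the coefficients of $x^2$, $y^2$, and the pure-$R(z)$ term in $[\triangledown\cdot\rho G]$ and show each is strictly positive for every $z$ (including $z=0$) precisely when $\alpha > 3$; the three inequalities from Theorem \ref{123} combine with the new $\lambda$-contributions, and the strengthened bound $\alpha > 3$ (rather than $\alpha > 2$) is exactly what forces strict positivity even on the plane $z = 0$, where $F$ alone gave only $\geq 0$.

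Finally, global asymptotic stability follows formally: local asymptotic stability at the origin holds because $JG(0)$ is Hurwitz, and the strict inequality $[\triangledown\cdot\rho G] > 0$ for all $z \in \R$ lets me invoke Corollary \ref{cor1} (noting the plane $z=0$ remains invariant under $G$ since $G_3 = \lambda z - zR(z)$ vanishes on $z=0$, and $\triangledown\cdot G < 0$ off the plane), yielding global attraction; combined with local asymptotic stability this gives global asymptotic stability. I expect the main obstacle to be the sign bookkeeping in the density step: one must check that adding the indefinite quantity $\lambda\rho(3 - \alpha\,\tfrac{2x^2+2y^2+zR'(z)}{x^2+y^2+R(z)})$ does not destroy positivity, and in particular confirm that the threshold genuinely rises from $2$ to $3$ and that no new constraint beyond $\alpha>3$ is needed --- verifying that the Theorem \ref{123} thresholds $\tfrac{a+c-1-2k}{2a}$ and $\tfrac{3-a-c}{2}$ are dominated once the $\lambda$-terms are present.
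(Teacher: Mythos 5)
Your overall route is the same as the paper's: write $[\triangledown\cdot \rho G] = [\triangledown\cdot \rho F] + \lambda\,\triangledown\cdot(\rho\,\mathrm{Id})$, compute
$\triangledown\cdot(\rho\,\mathrm{Id}) = \bigl((3-2\alpha)(x^2+y^2)+3R(z)-\alpha z R'(z)\bigr)/(x^2+y^2+R(z))^{\alpha+1}$,
use $\lambda<0$ to make this extra contribution nonnegative (strictly positive off the origin, in particular on the plane $z=0$ where $[\triangledown\cdot\rho F]$ vanishes), and conclude with Corollary \ref{cor1}; the Hurwitz step via $JG=\lambda I+JF$ is also exactly the paper's.

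However, one of your stated verification goals is false and, if pursued literally, breaks the density step. You propose to confirm that ``no new constraint beyond $\alpha>3$ is needed,'' i.e.\ that the Theorem \ref{123} thresholds $\frac{a+c-1-2k}{2a}$ and $\frac{3-a-c}{2}$ become dominated once the $\lambda$-terms appear. They do not: the $\lambda$-contribution to each coefficient is of strictly lower order in $z$ than the possibly negative parts of $[\triangledown\cdot\rho F]$, so it cannot compensate them at infinity. Concretely, take $a=c=-2$ and $k=1$, so $\frac{3-a-c}{2}=\frac72>3$, and pick $\alpha=3.2$. On the $z$-axis the numerator of $[\triangledown\cdot\rho G]$ equals
\begin{equation*}
\bigl(a+c-1+2(\alpha-1)\bigr)a_2^2 z^4 \;+\; |\lambda|(2\alpha-3)a_2 z^2
\;=\; -0.6\,a_2^2 z^4 \;+\; 3.4\,|\lambda|\,a_2 z^2 ,
\end{equation*}
which is negative for all $|z|$ large, hence $[\triangledown\cdot\rho G]<0$ on a set of positive measure and $\rho$ is \emph{not} a density function for $G$ with this $\alpha$. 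The correct reading --- and what the paper's proof does with the phrase ``additional condition'' --- is that $\alpha$ must satisfy the Theorem \ref{123} constraints \emph{and moreover} $\alpha>3$; with that reading your coefficient bookkeeping goes through. A smaller inaccuracy in the same direction: the $\lambda$-term is positive as soon as $\alpha>\frac32$, so the bound $3$ is a convenient sufficient choice, not the sharp threshold your ``precisely when $\alpha>3$'' suggests.
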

\begin{proof} $ G $ is Hurwitz  due to the fact that $ F $ is almost Hurwitz and $ JG = \lambda I  + JF $. Since
$$ [\triangledown\cdot \rho G] (x,y,z) = [\triangledown\cdot \rho F] (x,y,z) + \lambda \frac{(3 - 2\alpha)(x^2 + y^2) + 3 R(z) - \alpha z R'(z)}{(x^2 + y^2 + R(z))^{\alpha + 1}} \; ,$$
under the additional condition $ \alpha > 3 $, $ \rho $ is a density function for $ G $. Finally, the global asymptotic stability of the origin follows from Corollary  \ref{cor1}.
\end{proof}

The previous results imply that if we are interested  in finding vector fields $F$ with the plane $ z = 0 $ invariant supporting density
functions (but without obvious Lyapunov functions), these fields must verify
$ \triangledown\cdot F >0$ over some open set. The following family of vector fields satisfy these conditions.

\begin{prop} \label{prop3.10}
Consider the vector field $ F = (P,Q,R) $ defined by
\begin{eqnarray*}
P(x,y,z) & = & -x + A_2 \, y  + a_1 \, x^3 + 3 \, a_2 \, x^2 \, y + 3 \, a_3 \, x \, y^2 + a_4 \, y^3 + 3 \, a_5 \, x^2 \, z + \\
 & & 6 \, a_6 \, x \, y \, z + 3 \, a_7 \, y^2 \, z + 3 \, a_8 \, x \, z^2 + 3 \, a_9 \, y \, z^2 + a_{10} \, z^3 \, , \\
Q(x,y,z) & = & -A_2 \, x - \, y  + b_1 \, x^3 + 3 \, b_2 \, x^2 \, y + 3 \, b_3 \, x \, y^2 + b_4 \, y^3 + 3 \, b_5 \, x^2 \, z + \\
& & 6 \, b_6 \, x \, y \, z + 3 \, b_7 \, y^2 \, z + 3 \, b_8 \, x \, z^2 + 3 \, b_9 \, y \, z^2 + b_{10} \, z^3  \, , \\
R(x,y,z) & = &  z \, (-1  +  3 \, c_5 \, x^2  +
6 \, c_6 \, x \, y  + 3 \, c_7 \, y^2 + 3 \, c_8 \, x \, z + 3 \, c_9 \, y \, z + c_{10} \, z^2) \, ,
 \end{eqnarray*}
 with

\begin{eqnarray*}
a_8 & = & \frac{2}{3} - a_3 \, , \; a_{10} = \frac{3}{2} \, (a_5 - a_7) \, , \\
b_1 & = & -3 \, a_2 + a_4 + 3 \, b_3 \, , \; b_2  =  1 - a_3 - \frac{c_{10}}{3} \, , \; b_4 =  3 - a_1 - c_{10} \, ,  \\
b_6  & = & \frac{1}{2} \, (a_5 - a_7) \, , \; b_7 = 2 \, a_6  +  b_5 \, , \; b_8  = 2 \, a_2 - a_4 -  a_9 - b_3  \, , \\
b_9 & = & -\frac{1}{3} + a_3 + \frac{c_{10}}{3} \, , \;  b_{10}  =  3 \, a_6 \, , \\
c_5 & = & -\frac{2}{3} + \frac{a_1}{3} + a_3 + \frac{c_{10}}{3} \, , \; c_6  =  \frac{2 a_4}{3} -  a_2 + b_3 \, , \; c_7  =  \frac{4}{3} - \frac{a_1}{3} - a_3  - \frac{c_{10}}{3} \, ,\\
c_8 & = & \frac{1}{2} \, (a_5 + a_7) \, , \; c_9 = a_6 + b_5 \, .
\end{eqnarray*}
Then $ F $ has the density function
$$ \rho(x,y,z) \; = \; \frac{1}{(x^2 + y^2 + z^2)^2} \; \cdot  $$
Moreover the plane $ z= 0 $ is invariant and the coefficients of $ F $ can be chosen such that $ \triangledown\cdot F $ is positive over some open set.
\end{prop}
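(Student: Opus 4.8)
The plan is to establish the three assertions in turn, treating the density-function property as the substantive one. The invariance of $\{z=0\}$ is immediate: since $R(x,y,z)=z\,(-1+\cdots)$ vanishes identically on $z=0$, the third component of $F$ is zero there, so the plane is flow-invariant. The open-set claim needs only one admissible choice of coefficients, which I postpone to the end.

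For the density-function claim, write $s=x^2+y^2+z^2$, so that $\rho=s^{-2}$ and $\nabla\rho=-4s^{-3}(x,y,z)$. Then
$$\nabla\cdot(\rho F)=\nabla\rho\cdot F+\rho\,\nabla\cdot F=s^{-3}\big[-4(xP+yQ+zR)+s\,(\nabla\cdot F)\big].$$
Since $s^{-3}>0$ away from the origin, it suffices to show the bracket is positive, and I would split everything by homogeneous degree. The linear part of $F$ is $(-x+A_2y,\,-A_2x-y,\,-z)$, whence $xP+yQ+zR=-s+N_4$ with $N_4=xP_3+yQ_3+zR_3$ the quartic form built from the cubic parts $P_3,Q_3,R_3$, while $\nabla\cdot F=-3+D_2$ with $D_2$ the quadratic part of the divergence. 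Substituting gives
$$-4(xP+yQ+zR)+s\,(\nabla\cdot F)=s+\big(sD_2-4N_4\big).$$
The crux is then the purely algebraic identity $sD_2-4N_4=s^2$, for then the bracket equals $s+s^2=s(1+s)>0$ for $(x,y,z)\neq 0$, so that $\nabla\cdot(\rho F)=(1+s)/s^{2}>0$ everywhere off the origin, in particular almost everywhere. Combined with the elementary integrability of $\rho\sim r^{-4}$ outside the unit ball in $\R^3$ (where $\int_{r\geq 1}r^{-4}\,r^2\,dr<\infty$), this makes $\rho$ a density function in the sense of Definition \ref{density}.

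The main obstacle is verifying the quartic identity $sD_2=4N_4+s^2$, and this is exactly where the long list of relations among the $a_i,b_i,c_i$ enters. Matching the fifteen degree-four monomials turns the identity into fifteen scalar equations, and the displayed constraints are engineered to satisfy precisely these; I would confirm this with a computer-algebra system, organizing the hand computation monomial by monomial. As a sanity check the pure fourth powers work out: the $x^2$-coefficient of $D_2$ is $3(a_1+b_2+c_5)=4a_1+1$ after substituting $b_2,c_5$, while the $x^4$-coefficient of $N_4$ is $a_1$, so the $x^4$-coefficient of $sD_2-4N_4$ is $(4a_1+1)-4a_1=1$, matching $s^2$; the same bookkeeping yields coefficient $1$ for $y^4,z^4$ and coefficient $2$ for $x^2y^2$. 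The remaining mixed and $z$-odd monomials must cancel, which is the content of the relations for $b_6,b_7,b_8,c_6,c_8,c_9$ and their companions.

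Finally, for the open-set claim, note $\nabla\cdot F=-3+D_2$. Since $a_1$ is a free parameter in the constraint list, choosing it so that $4a_1+1>0$ (for instance $a_1=0$) makes the $x^2$-coefficient of $D_2$ positive; then along the $x$-axis $\nabla\cdot F(x,0,0)=-3+(4a_1+1)x^2>0$ for $|x|$ large, and by continuity $\nabla\cdot F>0$ on a nonempty open set, as required.
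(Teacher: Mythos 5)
Your proposal is correct and follows essentially the same route as the paper: both hinge on the algebraic identity $-4(xP+yQ+zR)+(x^2+y^2+z^2)\,[\triangledown\cdot F]=(1+x^2+y^2+z^2)(x^2+y^2+z^2)$ (which the paper states explicitly in the remark following its proof), giving $[\triangledown\cdot \rho F]=(1+s)/s^2>0$ with $s=x^2+y^2+z^2$, together with the integrability of $\rho\sim r^{-4}$ and a choice of coefficients making the divergence positive on an open set. Your degree-by-degree organization and spot checks of the monomial coefficients are accurate (they agree with the paper's displayed formula for $\triangledown\cdot F$), and deferring the remaining monomial verifications to a computer-algebra system matches the level of detail of the paper's own ``straightforward computations.''
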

\begin{proof}  Clearly the plane $ z = 0 $ is invariant. After  straightforward computations we obtain
$$  [\triangledown\cdot \rho F](x,y,z) \; = \; \frac{1 + x^2 + y^2 + z^2}{(x^2 + y^2 + z^2)^2} \, ,$$
and that the integrability condition of $ \rho $ holds.
Moreover
 \begin{eqnarray*}
 \triangledown\cdot F \, (x,y,z) & = & -3 + (1 + 4 a_1) x^2  + (13 - 4 a_1 - 4 c_{10}) y^2 + (24 a_6 + 12 b_5) y z \\
 &  & + \, (1 + 4 c_{10}) z^2 +
 x ((4 a_4 + 12 b_3) y + 12 a_5 z) \, ,
 \end{eqnarray*}
assume positive values. For example, if  $ y = 0 $, we have $ \; \triangledown\cdot F \, (x,0,z) = -3 +  (1 + 4 a_1) x^2 + (1 + 4 c_{10}) z^2 + 12 a_5 x z $ which is positive for $ x, z $ sufficient large and  $ a_1 > 0 $ and $ a_5 < 0 $.
\end{proof}

\begin{rem} The coefficients of the vector field $ F = (P,Q,R) $ of Proposition \ref{prop3.10} were obtained from the equation
$$   [\triangledown\cdot \rho F](x,y,z) \; = \; \frac{1 + x^2 + y^2 + z^2}{(x^2 + y^2 + z^2)^2} \, ,$$
which is equivalent to
$$ -4 \, (x P + y Q + z R) \; + \; (x^2 + y^2 + z^2) \, [\triangledown\cdot F] \; = \; (1 + x^2 + y^2 + z^2) \, (x^2 + y^2 + z^2) \, . $$
\end{rem}



\end{document}